\documentclass[10pt,twoside, a4paper, english, reqno]{amsart}
\usepackage{graphicx, amsmath, varioref, amscd, amssymb,color, bm, stmaryrd, amsthm}
\usepackage{fancybox}
\usepackage[pdftex, colorlinks=true, backref]{hyperref}
\usepackage{mathrsfs}

\numberwithin{equation}{section}
\allowdisplaybreaks

\newtheorem{theorem}{Theorem}[section]
\newtheorem{lemma}[theorem]{Lemma}
\newtheorem{corollary}[theorem]{Corollary}

\newtheorem{proposition}[theorem]{Proposition}
\theoremstyle{definition}

\theoremstyle{remark}
\newtheorem{remark}[theorem]{Remark}

\newcommand{\n}{\|\!|}

\def\pa{\partial}
\def\na{\nabla}

\newcommand{\Div}{\operatorname{div}}

\newcommand{\Grad}{\nabla}
\newcommand{\vr}{\varrho}

\def\E{{\mathscr E}}

\newcommand{\CI}{\mathbb{I}}

\newcommand{\R}{\mathbb{R}}

\newcommand{\Om}{\ensuremath{\Omega}}

\newcommand{\F}{\mathcal{F}}

\newcommand{\opf}[1]{\mathcal{#1}}
\newcommand{\eps}{\epsilon}
\newcommand{\wE}{\widehat E}

\begin{document}

\title[Hydrodynamic limit of the kinetic Cucker-Smale flocking model]
	  {Hydrodynamic limit of the kinetic \\Cucker-Smale flocking model} 

	\author[Karper]{Trygve K. Karper}\thanks{The work of T.K was supported by the Research Council of Norway through the project 205738}

	\address[Karper]{\newline
	Center for Scientific Computation and Mathematical Modeling, University of Maryland, College Park, MD 20742}
	\email[]{\href{karper@gmail.com}{karper@gmail.com}}
	\urladdr{\href{http://folk.uio.no/~trygvekk}{folk.uio.no/\~{}trygvekk}}

	\author[Mellet]{Antoine Mellet}\thanks{The work of A.M was supported by the National Science Foundation  under the Grant  DMS-0901340}

	\address[Mellet]{\newline
	Department of Mathematics, University of Maryland, College Park, MD 20742}
	\email[]{\href{mellet@math.umd.edu}{mellet@math.umd.edu}}
	\urladdr{\href{http://www.math.umd.edu/~mellet}{math.umd.edu/~{}mellet}}

	\author[Trivisa]{Konstantina Trivisa}\thanks{The work of K.T. was supported by the National Science Foundation   under the Grants  DMS-1109397 and DMS-1211638}
	\address[Trivisa]{\newline
	Department of Mathematics, University of Maryland, College Park, MD 20742}
	\email[]{\href{trivisa@math.umd.edu}{trivisa@math.umd.edu}}
	\urladdr{\href{http://www.math.umd.edu/~trivisa}{math.umd.edu/~{}trivisa}}

	\date{\today}

	\subjclass[2010]{Primary:35Q84; Secondary:35D30}

	\keywords{flocking, kinetic equations, hydrodynamic limit, relative entropy, Cucker-Smale, self-organized dynamics}


\maketitle

\begin{abstract}
 The hydrodynamic limit of the kinetic Cucker-Smale  flocking model is investigated. 
The starting point is the  model considered in  \cite{KMT1}, which in addition to 
the free-transport of individuals and the Cucker-Smale alignment operator, 
includes a strong local alignment term. This term was derived in \cite{KMT2} as the singular limit  
of an alignment operator due to Motsch and Tadmor \cite{MT-2011}. 
The model is enhanced with the addition of noise and a confinement potential.
The objective of this work is the rigorous investigation of the singular limit  
 corresponding to strong noise and strong local alignment.  
The proof relies on  the relative entropy method and entropy inequalities which yield the appropriate convergence results.
The resulting limiting system is an Euler-type flocking system.

\end{abstract}
\setcounter{tocdepth}{1}
\tableofcontents

\section{Introduction}
Mathematical models aimed at capturing parts of the flocking behavior 
exhibited by animals such as birds, fish, or insects, 
are currently receiving widespread attention in the mathematical community. 
Many of these models have sprung out in the wake of the seminal 
paper by Cucker and Smale \cite{CS-2007A}. The typical approach is based 
on particle models where each individual follows a simple set of  
rules. To date, the majority of studies on flocking models have been on the 
behavior of the particle model or the corresponding kinetic equation. 

For practical purposes, if the number of individuals in the flock is very 
high, it might be desirable to identify regimes where the complexity 
of the model may be reduced. This paper is a modest contribution 
to this complex task.
The starting point for our study is the following 
kinetic Cucker-Smale equation on $\R^d\times\R^d\times(0,T)$
\begin{equation}\label{eq:eq1}
	f_t + v\cdot \Grad_x f  + \Div_v\left(fL[f]\right)  -\na_x \Phi\cdot \na_vf =\sigma \Delta_v f +\beta \Div_v(f(v -u)).
\end{equation}
Here, $f:= f(t,x,v)$ is the scalar density of individuals,  $d\geq 1$ is the spatial dimension, $\beta, \; \sigma \geq 0$
are some constants and $\Phi$ is a given confinement potential.
The alignment operator $L$ is the usual Cucker-Smale (CS)  operator, which has the form
\begin{equation}\label{eq:CS}
L[f] = \int_{\R^d} \int_{R^d} K(x,y)  f(y,w) (w-v)\, dw\, dy,
\end{equation}
with $K$ being a smooth symmetric kernel.
The last term in (\ref{eq:eq1}) describes  strong local alignment interactions, where  $u$ denotes the average local velocity,  defined  by 
$$u(t,x) = \frac{\int_{\R^d}fv~dv}{\int_{\R^d}f~dv}.$$ 
This strong alignment term was introduced in \cite{KMT2} as the following singular limit of the Motsch-Tadmor (MT) alignment operator
\begin{equation}\label{eq:MT} 
\lim_{\phi \rightarrow \delta} \tilde L[f] = \lim_{\phi \rightarrow \delta}\frac{\int_{\R^d} \int_{R^d} \phi(x-y)  f(y,w) (w-v)\, dw\, dy}{\int_{\R^d} \int_{R^d} \phi(x-y)  f(y,w)\, dw\, dy} = (u-v).
\end{equation}
Since the MT term is relatively new in the literature, 
a remark on its purpose seems appropriate.
The MT operator was introduced in \cite{MT-2011} to
 correct a deficiency in the standard Cucker-Smale 
model. Specifically, since the CS operator $L[\cdot]$ is weighted by the density, the effect of the term
is almost zero in sparsely populated regions. The MT operator instead weights by a local average density.
To arrive at \eqref{eq:eq1}, the rationale is to let the MT operator govern alignment 
at small scales and the CS operator the large scales. Our equation \eqref{eq:eq1} is 
then obtained in the local limit \eqref{eq:MT} which seems appropriate at the mesoscopic level.

Since the kinetic equation \eqref{eq:eq1} is posed in $2d+1$ dimensions, 
obtaining a numerical solution of \eqref{eq:eq1} is very costly. In fact, 
the most feasible approach seems to be Monte-Carlo methods
using solutions of the underlying particle model with a large number of particles and realizations. 
Consequently, it is of great interest to determine parameter regimes 
where the model may be reduced in complexity. 
The goal of this paper is to study the singular limit of \eqref{eq:eq1} corresponding to strong noise and strong local alignment, 
that is $\sigma, \beta \to \infty$. More precisely, we are concerned with the limit $\eps\to0$ in the following equation:
\begin{equation}\label{eq:eq2}
	f^\eps_t + v\cdot \Grad_x f^\eps  + \Div_v\left(f^\eps L[f^\eps]\right) -\na_x\Phi\cdot \na_v f^\eps   =\frac{1}{\eps}\Delta_v f^\eps +\frac{1}{\eps} \Div_v(f^\eps(v -u^\eps)).
\end{equation}
This scaling can alternatively be obtained from \eqref{eq:eq1} by the change of variables
\begin{equation}
	x = \epsilon \bar x, \qquad t = \epsilon \bar t, \nonumber
\end{equation}
and assuming that
$K(x,y) = \eps \bar K(x-y)$.

In the remaining parts of this paper, we shall establish with rigorous arguments  that 
\begin{equation*}
	f^\eps \rightarrow \vr(t,x)e^\frac{|v-u(t,x)|^2}{2},
\end{equation*}
where $\vr$ and $u$ are the $\epsilon \rightarrow 0$ limits of
\begin{equation*}
	\vr^\eps = \int_{\R^d}f^\eps ~dv, \qquad \vr^\eps u^\eps = \int_{\R^d}f^\epsilon v~dv.
\end{equation*}
As a consequence, we will conclude that the dynamics of $f$ is 
totally described by the following Euler-Flocking system
\begin{align}\label{eq:cont}
	\vr_t + \Div_x (\vr u) &= 0, \\
		(\vr u)_t + \Div_x(\vr u \otimes u) + \Grad_x \vr 
		& =\!\! \int_{\R^d} K(x,y)\vr(x)\vr(y)[u(y)-u(x)]~dy - \vr \Grad_x \Phi. \label{eq:moment}
\end{align}
The result will be precisely stated in Theorem \ref{thm:main} with the proof 
coming up in Section \ref{sec:proof}. The proof is established  via a relative entropy argument providing in addition  a rate of convergence in $\epsilon$. 
The relative entropy method relies on the ``weak-strong" uniqueness principle established 
by Dafermos for systems of conservation laws admitting convex entropy functional \cite{Dafermos-79} (see also \cite{DiPerna-79}).
It has been successfully used to study hydrodynamic limits of particle systems \cite{GLT-2009, Landin-2004, Mourragui-1996,Yau-1991}.
In our case, the result will be somewhat restricted as we need the existence of smooth solutions
to \eqref{eq:cont} which we only know locally in time.

\begin{remark}
An important issue in the study of Cucker-Smale type equations is whether a given model leads to flocking behavior. By flocking, it is usually meant that the velocity $u(x,t)$ converges, for large $t$, to a constant velocity $\bar u$.
Here, we note that flocking 
can only occur in \eqref{eq:cont}-\eqref{eq:moment} if the confinement potential and the pressure 
are in balance. 
Indeed, if $(\vr, \bar u)$ (with $\bar u$ constant) solves \eqref{eq:moment}, then
 $\Grad \vr = -\vr\Grad \Phi$ and hence $\vr = \frac{M}{\int e^{-\Phi}~dx}e^{-\Phi}$.	
We thus see that flocking only 
occurs when the density is very diluted and, in particular, does 
not have compact support. 
\end{remark}

\subsection{Formal derivation of (\ref{eq:cont}) - (\ref{eq:moment})}
For the convenience of the reader, let us 
now give the formal arguments for why \eqref{eq:cont} - \eqref{eq:moment}
can be expected in the limit. 
First, we note that to obtain an interesting limit as $\epsilon \rightarrow 0$ in \eqref{eq:eq2},  
the right-hand side should converge to zero
$$\Delta_v f^\epsilon  + \Div_v(f^\epsilon(v -u^\epsilon)) \rightarrow 0.$$
If this is the case,  the limit $f$ can only have the following form
$$
f^\epsilon \rightarrow f(t,x,v) = \vr(t,x)e^{-\frac{|v-u(t,x)|^2}{2}}.
$$
Hence, it seems plausible that the evolution of $f$ (in the limit) 
can be governed by equations for the macroscopic quantities $\vr$ and $u$
alone. 

To derive equations for $\vr$ and $u$, let us first integrate \eqref{eq:eq2} with respect to $v$
\begin{equation}\label{eq:cont0}
	\vr^\eps_t + \Div (\vr^\eps u^\eps) = 0.
\end{equation}
Hence, by assuming the appropriate converge properties 
and passing to the limit, we obtain the continuity equation \eqref{eq:cont}.

To formally derive \eqref{eq:moment}, let us multiply \eqref{eq:eq2} by $v$ and integrating with respect to $v$ to obtain
\begin{equation}\label{eq:mom}
	\begin{split}
		&(\vr^\eps u^\eps)_t + \Div_x \left(\int_{\R^d} (v\otimes v) f^\eps~dv\right) \\
		&\qquad  - \int_{\R^d} K(x,y)\vr^\epsilon(x)\vr^\epsilon(y)(u^\epsilon (x)-u^\epsilon (y))~dy + \vr^\eps \Grad_x \Phi
	= 0.
	\end{split}
\end{equation}
Passing to the limit in \eqref{eq:cont0} and \eqref{eq:mom} (assuming that $f^\eps \rightarrow \vr e^{-\frac{|u-v|^2}{2}}$, $\vr^\eps \rightarrow \vr$ and $u^\eps \rightarrow u$),
we get:
\begin{equation}\label{eq:jalla}
	\begin{split}
			&(\vr u)_t + \Div_x \left(\vr\int_{\R^d} (v\otimes v) e^{-\frac{|v-u|^2}{2}}~dv\right) \\
			&\qquad  - \int_{\R^d} K(x,y)\vr(x)\vr(y)(u (x)-u(y))~dy + \vr \Grad_x \Phi
		= 0.		
	\end{split}
\end{equation}
By adding and subtracting $u$, we discover that
\begin{equation*}
	\begin{split}
		\int_{\R^d} (v\otimes v) e^{-\frac{|v-u|^2}{2}}~ dv 
		&=\int_{\R^d} (u\otimes u) e^{-\frac{|v-u|^2}{2}} + (v-u)\otimes (v-u)e^{-\frac{|v-u|^2}{2}}~dv \\
		&= u\otimes u  + \mathbb{I}.
	\end{split}
\end{equation*}
Inserting this expression in \eqref{eq:jalla} gives \eqref{eq:moment}.
\qed

\subsection*{Organization of the paper:}
The rest of this paper is organized as follows: 
In Section \ref{S2}, we recall some existence results for the kinetic flocking model \eqref{eq:eq1} (these results were proved in \cite{KMT1}) and for the Euler-flocking model \eqref{eq:cont}-\eqref{eq:moment} (a proof of this result is provided in Appendix \ref{S5}).
In Section \ref{S3} we present our main result, which establishes the convergence of weak solutions  of  the   kinetic equation \eqref{eq:eq2} to the strong solution of the Euler-flocking  system \eqref{eq:cont}-\eqref{eq:moment}. 
The proof of the main theorem is then developed in Section \ref{sec:proof}.

\section{Existence theory}\label{S2}
The purpose of this section is to state some existence results upon which our result relies. 
More precisely, the proof of our main result (convergence of \eqref{eq:eq2} to \eqref{eq:cont}-\eqref{eq:moment}) makes use of relative entropy arguments which require the existence of weak solutions of \eqref{eq:eq2} satisfying an appropriate entropy inequality, and the existence of strong solutions to the Euler-Flocking system \eqref{eq:cont}-\eqref{eq:moment} satisfying an entropy equality.
Note that the latter result will be obtained only for short time.

Since entropies play a crucial role throughout the paper, we first need to present the entropy equalities and inequalities satisfied by smooth solutions of \eqref{eq:eq2}  and \eqref{eq:cont}-\eqref{eq:moment}.

\subsection{Entropy inequalities}
Solutions of  \eqref{eq:eq1} satisfy an important entropy equality, which was derived in \cite{KMT1}:
We define the entropy
\begin{equation}
	\F(f) =  \int_{\R^{2d}}f \log f + f\frac{|v|^2}{2} + f\Phi\,dv\, dx \label{entropy}
\end{equation}
and the dissipations
\begin{equation}\label{dissipation}
	\begin{split}
		D_1(f)&= \int_{\R^{2d}} \frac{1}{f}\left|\Grad_v f - f(u-v)\right|^2~\, dv\, dx,\\
		D_2(f)&= \frac{1}{2}\int_{\R^d}\int_{\R^d}\int_{\R^d}\int_{\R^d}K(x,y)f(x,v)f(y,w)\left|v - w\right|^2\, dw\, dy\, dv\, dx.
	\end{split}
\end{equation}
The latter is the dissipation associated with the CS operator $L[\cdot]$.
There holds:
\begin{proposition}\label{prop:main}
Assume that $L$ is the alignment operator  given by \eqref{eq:CS}
with $K$ symmetric and bounded.
If $f$ is a  smooth solution of (\ref{eq:eq2}), then $f$ satisfies
\begin{align}\label{eq:entropy1}
		&\partial_t \mathcal{F}(f) + \frac{1}{\eps}D_1(f) + D_2(f) 
		\nonumber\\
		&  \qquad = d \int_{\R^d}\int_{\R^d}\int_{\R^d}\int_{\R^d}K(x,y)f(x,v)f(y,w) \, dw\, dy\, dv\, dx, 
\end{align}
with $\F(\cdot)$, $D_1(\cdot), D_2(\cdot)$ given by \eqref{entropy} and \eqref{dissipation}. 
Furthermore, if the confinement potential  $\Phi$ is non-negative and satisfies
\begin{equation}\label{eq:conff}
\int_{\R^d} e^{-\Phi(x)}\, dx <+\infty,
\end{equation}
then there exists $C$, depending only on $\|K\|_{\infty}$, $\Phi$ and $\int f_0(x,v)\, dx\, dv$, such that
\begin{equation}\label{eq:entropy1.2}
	\begin{split}
		&\partial_t \mathcal{F}(f) + \frac{1}{2\eps}D_1(f) \\ 
		& \qquad + \frac{1}{2} \int_{\R^d} \int_{\R^d} K(x,y)\vr(x)\vr(y)\left|u(x) - u(y) \right|^2\, dy\,dx  
		 \leq C\eps \mathcal{F}(f(t)).
	\end{split}
\end{equation}
\end{proposition}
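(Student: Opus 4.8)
The plan is to obtain the equality \eqref{eq:entropy1} by a direct energy identity and then to read off \eqref{eq:entropy1.2} by reorganizing the Cucker--Smale dissipation $D_2$ around the local Maxwellian. For the equality I would test \eqref{eq:eq2} with the entropy variable $\psi = \log f + 1 + \tfrac{|v|^2}{2} + \Phi$, which is precisely $\delta\F/\delta f$, so that $\int_{\R^{2d}}\psi\, f_t = \partial_t\F(f)$, and then integrate each remaining term by parts (all boundary terms vanishing for smooth, rapidly decaying $f$). The free transport $v\cdot\na_x f$ and the confinement term $\na_x\Phi\cdot\na_v f$ each contribute $\pm\int f\,v\cdot\na_x\Phi$ and cancel. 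For the diffusion/local-alignment block I would use the identity $\na_v\psi = \tfrac1f\bigl(\na_v f + f(v-u)\bigr) + u$ together with $\int_{\R^d}\bigl(\na_v f + f(v-u)\bigr)\,dv = 0$, which holds by the very definition of $u$; this produces exactly $-\tfrac1\eps D_1(f)$. Finally, writing $L[f] = b(x) - a(x)\,v$ with $a(x)=\int\!\int K(x,y)f(y,w)\,dw\,dy$, the alignment term splits, via $\Div_v L[f] = -d\,a(x)$ and a symmetrization in $(x,v)\leftrightarrow(y,w)$, into the production term $d\int\!\int\!\int\!\int Kff$ and $-D_2(f)$. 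Collecting these gives \eqref{eq:entropy1}.

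For the inequality, the key step is the decomposition $v-w = (v-u(x)) - (w-u(y)) + (u(x)-u(y))$ inside $D_2$. Since $\int_{\R^d} f(x,v)(v-u(x))\,dv = 0$, every cross term integrates to zero and, with $e(x) := \int_{\R^d} f\,|v-u(x)|^2\,dv$ and $K$ symmetric,
\[
D_2(f) = \tfrac12\int\!\int K(x,y)\vr(x)\vr(y)\,|u(x)-u(y)|^2\,dy\,dx + \int\!\int K(x,y)\vr(y)\,e(x)\,dy\,dx .
\]
Inserting this into \eqref{eq:entropy1} and combining the second term with the production term $d\int\!\int K\vr\vr$ reduces the identity to
\[
\partial_t\F(f) + \tfrac1\eps D_1(f) + \tfrac12\int\!\int K\vr\vr\,|u(x)-u(y)|^2
= \int\!\int K(x,y)\vr(y)\,[\,d\vr(x) - e(x)\,]\,dy\,dx .
\]
The right-hand side measures the deviation from the local Maxwellian: integrating $\Div_v[(v-u)f]=0$ gives $d\vr(x)-e(x) = -\int (v-u)\cdot(\na_v f + f(v-u))\,dv$, so Cauchy--Schwarz yields $|d\vr(x)-e(x)|\le e(x)^{1/2} d_1(x)^{1/2}$, where $d_1$ denotes the integrand of $D_1$.

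It then remains to estimate the right-hand side. Using $\norm{K}_\infty$, the conserved mass $\int\vr\,dy = M$, Cauchy--Schwarz in $x$, and Young's inequality, I would bound it by $\tfrac1{2\eps}D_1(f) + \tfrac{\eps}{2}\norm{K}_\infty^2 M^2 \int e\,dx$; absorbing $\tfrac1{2\eps}D_1$ into the left-hand side leaves exactly the structure of \eqref{eq:entropy1.2}. The final ingredient is the coercivity bound $\int e\,dx \le C\,\F(f)$, and this I expect to be the main obstacle, since the logarithmic entropy $\int f\log f$ is sign-indefinite. Here one writes $\int e \le \int\!\int f|v|^2 = 2\bigl(\F(f) - \int f\log f - \int f\Phi\bigr)$ and bounds the negative part of $\int f\log f$ from below by splitting against $e^{-|v|^2/2-\Phi}$ and using $t\log\tfrac1t\le\tfrac2e\sqrt t$ on $(0,1)$; this is precisely where $\Phi\ge 0$ and the integrability assumption \eqref{eq:conff} enter, and it permits absorbing half of the kinetic energy to obtain $\int e\,dx \le C\,\F(f) + C$, whence \eqref{eq:entropy1.2} follows (the additive constant being absorbed through the lower bound on $\F$ furnished by \eqref{eq:conff}). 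A secondary technical point is the justification of the integrations by parts in the first step, which relies on the assumed smoothness and decay of $f$.
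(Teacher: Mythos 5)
First, a point of comparison: this paper does not actually prove Proposition \ref{prop:main} --- it quotes the entropy estimates from \cite{KMT1} --- so your argument must be judged on its own merits. On those merits, most of it is sound. The derivation of the equality \eqref{eq:entropy1} is correct: testing with $\psi=\log f+1+\tfrac{|v|^2}{2}+\Phi$, the cancellation of $\pm\int f\,v\cdot\na_x\Phi$ between transport and confinement, the identity $\na_v\psi=\tfrac1f\bigl(\na_v f+f(v-u)\bigr)+u$ with $\int_{\R^d}\bigl(\na_v f+f(v-u)\bigr)\,dv=0$ producing $\tfrac1\eps D_1$, and the splitting of the alignment term via $\Div_v L[f]=-d\,a(x)$ plus symmetrization all check out. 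So does the reduction toward \eqref{eq:entropy1.2}: the orthogonal decomposition of $D_2$ around the local mean velocity, the identity $d\vr(x)-e(x)=-\int(v-u)\cdot\bigl(\na_v f+f(v-u)\bigr)\,dv$, the Cauchy--Schwarz bound $|d\vr-e|\le e^{1/2}d_1^{1/2}$, and the Young step absorbing $\tfrac1{2\eps}D_1$ are exactly the right mechanism, reducing everything to the coercivity bound $\int e\,dx\le C\,\F(f)+C$.

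That final step, however, is where your sketch has a genuine gap. Splitting against $e^{-|v|^2/2-\Phi}$ and using $t\log(1/t)\le\tfrac2e\sqrt t$ bounds the tail contribution by $\tfrac2e\,e^{-|v|^2/4-\Phi/2}$, whose integrability in $x$ requires $\int e^{-\Phi/2}\,dx<\infty$ --- strictly stronger than \eqref{eq:conff} (consider $\Phi=(d+1)\log(1+|x|)$). The defect cannot be patched by adjusting the comparison function: any bound of the form $-\int f\log f\le \alpha\int f\tfrac{|v|^2}{2}+\beta\int f\Phi + C$ obtained by such a splitting forces $\beta>1$ for the tail term to be integrable under \eqref{eq:conff}, and then $\beta\int f\Phi$ exceeds the single $\int f\Phi$ available inside $\F$ and cannot be absorbed; indeed $\int f\Phi$ is genuinely \emph{not} controlled by $\F$ under \eqref{eq:conff} alone, as near-critical profiles $f\propto e^{-\theta(|v|^2/2+\Phi)}\mathbf{1}_{\{|x|\le R\}}$ with $\theta\uparrow1$, $R\to\infty$ show when $e^{-\theta\Phi}\notin L^1$ for all $\theta<1$. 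The correct move is to borrow weight only in the $v$ variable, leaving the full $e^{-\Phi}$ intact in $x$: by Jensen's inequality, for $Z=(4\pi)^{d/2}\int_{\R^d}e^{-\Phi}\,dx<\infty$,
\[
\F(f)-\frac14\int_{\R^{2d}} f|v|^2\,dv\,dx \;=\; \int_{\R^{2d}} f\log\frac{f}{e^{-|v|^2/4-\Phi}}\,dv\,dx \;\ge\; M\log\frac{M}{Z},
\]
so $\int e\,dx\le\int f|v|^2\,dv\,dx\le 4\F(f)+C$, using exactly the hypotheses $\Phi\ge0$ and \eqref{eq:conff}. One last blemish, which you partly flag: absorbing the resulting additive constant into $C\eps\,\F(f(t))$ requires a \emph{positive} lower bound on $\F$, while Jensen only yields $\F\ge M\log(M/Z)$, which may be negative; strictly one obtains $\le C\eps\bigl(\F(f(t))+C\bigr)$, which is what is in fact used downstream (in \eqref{eq:entropyfinal2} and the Gronwall step), so this is a matter of normalizing the entropy rather than a defect of the structure of your argument.
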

The first inequality (\ref{eq:entropy1}) shows that the nonlocal alignment term is responsible for some creation of entropy. 
The second inequality (\ref{eq:entropy1.2}) shows that this term can be controlled by $D_1(f)$ and the entropy itself. This last inequality will play a key role in this paper.

The Euler system of equations \eqref{eq:cont}-\eqref{eq:moment} also satisfies a classical entropy equality.
More precisely, if we define
$$ \E(\vr,u) = \int_{\R^d} \vr \frac{u^2}{2}\, dx + \int_{\R^d} \vr \log\vr\,  + \vr \Phi dx,$$
then any smooth solution of \eqref{eq:cont}-\eqref{eq:moment} satisfies
$$\partial_t \E(\vr,u) +  \frac{1}{2}\int_{\R^d}\int_{\R^d}K(x,y)\vr(x)\vr(y)\left|u(x) - u(y)\right|^2~dy\, dx = 0.$$

Note that the entropy $\E$ and $\F$ are related to each other by the relation
$$ \F\left(\frac{\vr}{(2\pi)^{d/2}}e^{-\frac{|v-u|^2}{2}}\right) = \E(\vr, u).$$
Furthermore, we have the following classical minimization principle (consequence of Jensen inequality):
\begin{equation}\label{eq:min}
\E(\vr,u)\leq \F(f),\qquad \mbox{ if } \vr=\int f\, dv, \; \vr u = \int vf\,dv.
\end{equation}
This relation will be important
in the upcoming analysis
when considering the relative entropy 
of solutions to the kinetic equation \eqref{eq:eq2} and solutions 
to \eqref{eq:cont} - \eqref{eq:moment}.

\subsection{Global weak solutions of the kinetic equation}
The existence of a weak  solution for \eqref{eq:eq1} is far from trivial because of the singularity in the definition of $u$.
We will say that a function $f$ satisfying
$$ f \in C(0,T;L^1(\R^{2d}))\cap L^\infty((0,T)\times\R^{2d}), \quad ( |v|^2+\Phi(x)) f \in L^\infty (0,\infty;L^1(\R^{2d})), $$
is a weak solution of (\ref{eq:eq1}) if the following holds:
\begin{equation}\label{eq:weak}
		\begin{split}
				&\int_{\R^{2d+1}} - f \psi_t - vf\Grad_x \psi +f\Grad_x \Phi \Grad_x \psi - fL[f]\Grad_v \psi ~dvdxdt\\
				&\quad + \int_{\R^{2d+1}}\sigma \Grad_v f\Grad_v \psi -\beta f(u-v)\Grad_v \psi~dvdxdt \\
				&\quad = \int_{\R^{2d}} f^0 \psi(0,\cdot)~dvdx,
		\end{split}
	\end{equation}
for any $\psi \in C_c^\infty([0,T)\times \R^{2d})$, where $u$ is such that $j=\vr u$.

\begin{remark}
Note that the definition of $u$ is ambiguous if $\vr$ vanishes (vacuum). We resolve this by defining $u$ pointwise as follows
\begin{equation}\label{eq:uu1} 
u(x,t)=\left\{ \begin{array}{ll}\displaystyle \frac{j(x,t)}{\vr(x,t)} & \mbox{ if }  \vr(x,t) \neq 0 \\[8pt] 0 &  \mbox{ if }  \vr(x,t) = 0 \end{array}\right.
\end{equation}
This gives a consistent definition of $u$ as can be seen from the bound
$$j \leq  \left(\int |v|^2 f(x,v,t)\,  dv\right)^{1/2} \vr^{1/2},$$
yielding $j=0$ whenever $\vr=0$ and so (\ref{eq:uu1}) implies $j=\vr u$.

\item Note also that $u$ does not belong to any $L^p$ space. However, we have
$$ \int_{\R^{2d}} |uf|^2\, dx\, dv \leq \|f\|_{L^\infty(\R^{2d})}  \int_{\R^{2d}} |v|^2 f(x,v,t)\,  dv\, dx,$$
so  the term $u f$ in the weak formulation (\ref{eq:weak})  makes sense as a function in $L^2$.
\end{remark}

The existence result we shall utilize in this paper was obtained as the main result in \cite{KMT1}
and is recalled in the following theorem:
\begin{theorem}\label{thm:kinetic}
Assume that $L$ is the alignment operator (CS) given by \eqref{eq:CS} 
with $K$ symmetric  and bounded.
Assume furthermore that $f_0$ satisfies
$$
f_0\in L^\infty(\R^{2d})\cap L^1(\R^{2d}), \quad \mbox{and} \quad  ( |v|^2+\Phi(x)) f_0 \in  L^1(\R^{2d}).
$$
Then, for all $\eps>0$,  there exist a weak solution $f^\eps$ of (\ref{eq:eq2}) satisfying 
\begin{equation}\label{eq:entropyfinal}
\mathcal F(f^\eps(t)) +\int_0^t  \frac{1}{\eps} D_1(f^\eps)+ D_2(f^\eps) \, ds \leq \mathcal F(f_0)
+Ct,
\end{equation}
where the constant $C$ depends only on $\|K\|_{\infty}$, $\Phi$ and $\int f_0(x,v)\, dx\, dv$.
Furthermore, if $\Phi$ satisfies (\ref{eq:conff}), then $f^\eps$ also satisfies
\begin{eqnarray}
&&\!\!\!\!\!\!\!\!\!\!\!\!\mathcal F(f^\eps(t)) + \frac{1}{2\eps} \int_0^tD_1(f^\eps)ds + \frac{1}{2} \int_0^t\int_{\R^{2d}} K(x,y)\vr^\eps(x)\vr^\eps(y)\left|u^\eps(x) - u^\eps(y) \right|^2 dydx ds \nonumber \\
&& \qquad\qquad  \leq \mathcal F(f_0) + C\eps\int_0^t \mathcal F(f^\eps(s))\, ds\label{eq:entropyfinal2}
\end{eqnarray}
for all $t>0$.
\end{theorem}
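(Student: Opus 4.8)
The plan is to construct $f^\eps$ as a limit of regularized problems and to obtain the two entropy inequalities by passing to the limit in the estimates already available at the formal level. Since Proposition \ref{prop:main} supplies the entropy equality (\ref{eq:entropy1}) and the refined inequality (\ref{eq:entropy1.2}) for \emph{smooth} solutions, the corresponding integrated bounds (\ref{eq:entropyfinal}) and (\ref{eq:entropyfinal2}) follow immediately for smooth solutions: one merely integrates (\ref{eq:entropy1}) in time, bounding the right-hand source by $d\|K\|_\infty\left(\int f\,dv\,dx\right)^2$, and integrates (\ref{eq:entropy1.2}) in time under (\ref{eq:conff}). The heart of the matter is therefore purely constructive: producing a \emph{weak} solution for which these identities survive as inequalities. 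The principal difficulty, as emphasized above, is the singular dependence of the local alignment term $\frac{1}{\eps}\Div_v(f^\eps(v-u^\eps))$ on $u^\eps=j^\eps/\vr^\eps$, which is ill-defined on the vacuum set $\{\vr^\eps=0\}$.

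First I would regularize. Fix $\eps>0$ and introduce a parameter $\delta>0$: I replace the singular velocity by the bounded field $u_\delta=j/(\vr+\delta)$ (equivalently, one may mollify the macroscopic fields in $x$) and add a vanishing artificial diffusion $\delta\Delta_x f$ to render the equation uniformly parabolic in $(x,v)$. For this regularized equation the coefficients are bounded and the couplings $L[\cdot]$ and $u_\delta$ depend continuously on $f$ through its moments, so a Schauder fixed-point argument — freezing $f$ in $L[\cdot]$ and in $u_\delta$, solving the resulting linear Fokker--Planck equation by standard parabolic theory, and checking compactness and invariance of the solution map — yields a smooth solution $f^{\eps,\delta}$.

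The next step is to collect $\delta$-uniform a priori bounds. Conservation of mass gives $\int_{\R^{2d}} f^{\eps,\delta}\,dv\,dx=\int_{\R^{2d}} f_0$, and a maximum-principle argument for this Fokker--Planck structure controls $\|f^{\eps,\delta}\|_{L^\infty}$ on $[0,T]$. The crucial bounds come from the entropy: testing the regularized equation against $\log f+\frac{|v|^2}{2}+\Phi$ reproduces (\ref{eq:entropy1}) up to $\delta$-errors of favorable sign or vanishing in the limit, yielding (\ref{eq:entropyfinal}) with control of $\int f|v|^2$, $\int f\Phi$, $\int f\log f$, and of the dissipations $\frac1\eps D_1$ and $D_2$; under (\ref{eq:conff}) the same manipulation produces (\ref{eq:entropyfinal2}). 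These estimates furnish weak-$\star$ compactness of $f^{\eps,\delta}$ and, via the identity $\int_{\R^{2d}}|uf|^2\,dx\,dv\le\|f\|_{L^\infty}\int_{\R^{2d}}|v|^2 f\,dv\,dx$ recorded in the Remark, a uniform $L^2$ bound on the flux $u_\delta f^{\eps,\delta}$.

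The main obstacle is the passage $\delta\to0$ in the singular term, since one must identify the limit of the product $f^{\eps,\delta}u_\delta=f^{\eps,\delta}\,j^{\eps,\delta}/(\vr^{\eps,\delta}+\delta)$, for which weak compactness alone does not suffice. Here I would invoke velocity averaging: the bound on $f^{\eps,\delta}$ together with control of $v\cdot\Grad_x f^{\eps,\delta}$ forces the moments $\vr^{\eps,\delta}=\int f^{\eps,\delta}\,dv$ and $j^{\eps,\delta}=\int v f^{\eps,\delta}\,dv$ to be \emph{strongly} compact in $L^p_{loc}$ in $(t,x)$ (the second-moment bound handles large velocities). Strong convergence of $\vr^{\eps,\delta}$ and $j^{\eps,\delta}$, the $L^2$ flux bound above, and the consistency relation $j=\vr u$ — which forces $j=0$ on $\{\vr=0\}$, so the vacuum set contributes nothing — then allow identification of the limit of the product and passage to the limit in the weak formulation (\ref{eq:weak}). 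Finally, $\F$ is convex hence weakly lower semicontinuous, and the dissipations $D_1,D_2$ are lower semicontinuous under these convergences, the strong convergence of the macroscopic fields being precisely what makes the $u$-dependent terms pass to the limit; consequently (\ref{eq:entropyfinal}) and (\ref{eq:entropyfinal2}) are preserved. Throughout, the delicate point is this control of the vacuum-supported singularity of $u$, which is exactly why the bound on $uf$ and the strong compactness of $\vr$ and $j$ are indispensable.
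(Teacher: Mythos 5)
You should first note a structural point: this paper does not prove Theorem \ref{thm:kinetic} at all. It is recalled verbatim from \cite{KMT1} (``The existence result we shall utilize in this paper was obtained as the main result in \cite{KMT1}''), and the only related material proved here is the entropy computation of Proposition \ref{prop:main} for \emph{smooth} solutions. So your proposal can only be measured against the construction in the cited reference, and against that benchmark your outline is essentially the standard, correct route and matches \cite{KMT1} in structure: regularize the vacuum singularity via $u_\delta = j/(\vr+\delta)$ (plus smoothing/artificial diffusion), solve the regularized problem by a fixed-point argument, extract $\delta$-uniform mass, $L^\infty$ and entropy--dissipation bounds, use velocity averaging to obtain \emph{strong} $L^p_{loc}$ compactness of the moments $\vr^{\eps,\delta}$ and $j^{\eps,\delta}$, identify the limit of the product $f u$ using $j=\vr u$ and the $L^2$ bound on $uf$, and conclude by lower semicontinuity.

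That said, two steps in your sketch are thinner than a complete proof requires, and both are points where \cite{KMT1} does genuine work. First, in the fixed-point step the field $u_\delta=j/(\vr+\delta)$ is \emph{not} bounded: one only has $|u_\delta|\le |j|/\delta$ with $j$ controlled in $L^2$, so linear parabolic theory with bounded smooth coefficients does not apply directly; one needs an additional truncation of $u_\delta$ (and suitable velocity cut-offs), removed afterwards using the second-moment and entropy bounds. Second, the passage to the limit in $D_1$ is not plain convexity: expanding, $D_1(f)=\int |\Grad_v f|^2/f\,dv\,dx+\int |v|^2 f\,dv\,dx-\int \vr |u|^2\,dx - 2d\int f\,dv\,dx$, and the term $-\int \vr|u|^2\,dx=-\int |j|^2/\vr\,dx$ enters with the \emph{wrong} sign for weak lower semicontinuity; the strong convergence of $(\vr,j)$ from averaging is therefore indispensable here too, not only for the product $fu$ in the weak formulation \eqref{eq:weak} — your last sentence gestures at this but does not isolate it. Finally, \eqref{eq:entropyfinal2} is not merely \eqref{eq:entropyfinal} ``under \eqref{eq:conff}'': it requires the absorption argument behind \eqref{eq:entropy1.2}, namely controlling the entropy-creation term $d\int\!\!\int K f f$ by $\frac{1}{2\eps}D_1 + C\eps\,\F$ and dominating $\frac12\int\!\!\int K\vr(x)\vr(y)|u(x)-u(y)|^2$ by $D_2$ via Cauchy--Schwarz, with \eqref{eq:conff} ensuring $\F$ is bounded below so the Gronwall step closes. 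None of these are wrong turns — they are the standard repairs, carried out in \cite{KMT1} — but as written your proposal elides them.
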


\subsection{Existence of solutions to the Euler-Flocking system}
As usual with relative entropy methods, our main result will state that the solutions of \eqref{eq:eq2} converge to a strong solution of the asymptotic system \eqref{eq:cont} - \eqref{eq:moment}, provided  such a solution exists. 
It is thus important to prove that  \eqref{eq:cont} - \eqref{eq:moment} has a strong solution, at least for short time.
This is the object of the next theorem (which we state in the case $d=3$):
\begin{theorem}\label{thm:Euler}
Let $(\rho_0,u_0)\in H^s(\R^3)$ with $s>5/2$ and $\rho_0(x)>0$ in $\R^3$ and  assume that $\na_x\Phi \in H^s(\R^3)$.
Then, there exist $T^*>0$ and functions $(\vr, u) \in C([0,T^*];H^s(\R^3))\cap C^1((0,T^*);H^{s-1}(\R^3))$, $\rho(x,t)>0$, 
such that $(\vr, u)$ is the unique strong solution of \eqref{eq:cont} - \eqref{eq:moment} for $t\in(0,T^*)$.
Moreover, $(\vr, u)$ satisfies the equality 
\begin{equation}\label{eq:entropyEuler}
	\partial_t  \E(\vr,u)+ \frac{1}{2}\int_{\R^d} \int_{\R^d}  K(x,y)\vr(x)\vr(y)\left|u(x)-u(y)\right|^2~dydx = 0.
\end{equation}
\end{theorem}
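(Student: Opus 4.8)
The plan is to treat \eqref{eq:cont}-\eqref{eq:moment} as a quasilinear symmetric hyperbolic system perturbed by lower-order source terms, and to run the classical local existence theory in the spirit of Kato and Majda. Since $p(\vr)=\vr$ is the isothermal pressure law, I would first pass to the nonconservative form: using $\vr>0$ to divide the momentum equation by $\vr$, the system becomes
\begin{align*}
	\vr_t + u\cdot\Grad_x\vr + \vr\,\Div_x u &= 0,\\
	u_t + (u\cdot\Grad_x)u + \tfrac{1}{\vr}\Grad_x\vr &= \int_{\R^3}K(x,y)\vr(y)\big(u(y)-u(x)\big)\,dy - \Grad_x\Phi.
\end{align*}
Writing $U=(\vr,u)$, the principal part is symmetrizable by $A_0=\mathrm{diag}(\vr^{-1},\vr\,\CI)$, which is uniformly positive definite precisely as long as $\vr$ remains bounded between two positive constants; starting from a datum with $\vr_0$ bounded away from zero (the natural framework here being a perturbation of a positive equilibrium state), this persists on a short interval by continuity. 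The two source terms on the right are of lower order, so the whole construction reduces to controlling them in $H^s$.

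The key point is that both source terms define smooth, locally Lipschitz maps on $H^s(\R^3)$, $s>5/2$. For the confinement force this is immediate from $\Grad_x\Phi\in H^s$ and the fact that $H^s$ is a Banach algebra. For the nonlocal alignment term I would set $a(x)=\int K(x,y)\vr(y)\,dy$ and $b(x)=\int K(x,y)\vr(y)u(y)\,dy$, so that the right-hand side reads $b-a\,u$; since $K$ is a smooth kernel with bounded derivatives, any $x$-differentiation falls on $K$, and Minkowski's inequality yields $\|a\|_{H^s}+\|b\|_{H^s}\le C(\|\vr\|_{H^s}+\|\vr u\|_{H^s})$, after which the algebra property controls the products. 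With these bounds recorded, I would build the solution by the usual iteration: linearize around the previous iterate, solve the resulting linear symmetric hyperbolic problem, derive uniform $H^s$ energy estimates via Moser-type commutator estimates for the quasilinear terms together with the source bounds above, and deduce contraction in the lower norm $L^2$ (or $H^{s-1}$). Passing to the limit produces $(\vr,u)\in C([0,T^*];H^s)\cap C^1((0,T^*);H^{s-1})$ with $\vr>0$, for some $T^*>0$ depending on the data.

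Uniqueness I would obtain by the same energy method applied to the difference of two solutions: subtract the equations, pair with the difference in the symmetrized variables, and close by Gronwall (this is exactly the $L^2$-stability underlying the relative entropy method used later). Finally, the entropy equality \eqref{eq:entropyEuler} is a direct computation for the smooth solution just constructed: pairing the momentum equation with $u$ and integrating, the pressure term $\int \Grad_x\vr\cdot u$ cancels against $\tfrac{d}{dt}\int\vr\log\vr$, the terms involving $\Grad_x\Phi$ cancel against $\tfrac{d}{dt}\int\vr\Phi$ through the continuity equation, and the alignment term symmetrizes by exchanging $x\leftrightarrow y$ and using $K(x,y)=K(y,x)$,
\begin{equation*}
	\int_{\R^3} u(x)\cdot\!\int_{\R^3}\! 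K\vr(x)\vr(y)\big(u(y)-u(x)\big)\,dy\,dx = -\tfrac12\!\int_{\R^3}\!\int_{\R^3}\! K\vr(x)\vr(y)\big|u(x)-u(y)\big|^2\,dy\,dx,
\end{equation*}
so that collecting all contributions gives exactly \eqref{eq:entropyEuler}.

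I expect the main obstacle to be the interplay between the quasilinear hyperbolic estimates and the density positivity: the symmetrizer $A_0$ degenerates as $\vr\to 0$, so the $H^s$ bounds and the lifespan $T^*$ must be closed in a way that \emph{simultaneously} keeps $\vr$ uniformly positive on $[0,T^*]$. The nonlocal alignment term, by contrast, is genuinely lower order and smoothing, so—once the $H^s$ bounds above are in hand—it does not touch the hyperbolic structure and enters the estimates only as a harmless forcing.
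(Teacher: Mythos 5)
Your proposal coincides with the paper's own proof in all essentials: the appendix likewise passes to the nonconservative form, symmetrizes with $A_0(w)=\mathrm{diag}(\vr^{-1},\vr\,\mathbb{I}_3)$, treats the nonlocal alignment and confinement terms as lower-order sources, and constructs the solution by the classical Majda iteration scheme (linearized symmetric hyperbolic problems with mollified data, uniform $H^s$ bounds, contraction in the low norm, interpolation and Sobolev embedding). The entropy equality \eqref{eq:entropyEuler} is also derived exactly as you describe, by pairing the continuity equation with $(\vr\log\vr)'$ and the momentum equation with $u$, then symmetrizing the alignment term using $K(x,y)=K(y,x)$.
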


Since the proof of this theorem is rather long and independent of the rest of the paper, we postpone it to the appendix.

Note in particular that the condition $s>5/2$ implies that the solution satisfies
\begin{equation}\label{eq:bdu} 
u \in L^\infty([0,T^*];W^{1,\infty}(\R^3)).
\end{equation}
Furthermore, dividing the momentum equation by $\rho$, we also get:
\begin{equation}\label{eq:bdlog} 
 \na_x \log \rho \in L^\infty([0,T^*]\times \R^3).
 \end{equation}
These two estimates is all the regularity we will need in our main theorem below.

\section{Main result}\label{S3}
With the existence results of the previous section,
we are   ready to state our main result concerning the convergence of  weak solutions of \eqref{eq:eq2} 
to the strong solution $(\vr, u)$ of the Euler-flocking 
system \eqref{eq:moment}-\eqref{eq:cont} 
as $\eps \rightarrow 0$.

\begin{theorem}\label{thm:main}
	Assume that:
	\begin{enumerate}
		\item $f_0$ is of the form 
		$$
		f_0 = \frac{\vr_0(x)}{(2\pi^{d/2})}e^{-\frac{|u_0(x)-v|^2}{2}},
		$$
		with
		$$
		f_0\in L^\infty(\R^{2d})\cap L^1(\R^{2d}), \quad \mbox{and} \quad  ( |v|^2+\Phi(x)) f_0 \in  L^1(\R^{2d}).
		$$
		\item $f^\eps$ is a weak solution of \eqref{eq:eq2} satisfying the entropy inequality \eqref{eq:entropyfinal2} and 
		with initial condition  $f^\eps(0, \cdot) = f_0(\cdot)$.
		\item
	          $T^*>0$ is the maximal time for which there exists a strong solution $(\vr, u)$ 
		to the Euler system of equations \eqref{eq:cont} - \eqref{eq:moment}, with  $\vr_0=\int_{\R^d}f_0~ dv$ and $\vr_0 u_0 = \int_{\R^d}f_0v~dv$ and satisfying \eqref{eq:bdu} and \eqref{eq:bdlog} (Theorem \ref{thm:Euler} gives in particular $T^*>0$ if $\rho_0$ and $u_0$ are regular enough).
	\end{enumerate}
\vspace{0.2cm}
There exists a constant $C > 0$ depending on 
$$
\opf F(f_0),~ \|K\|_{L^\infty},~ \Phi,~ T^*, \quad \| u\|_{L^\infty(0,T^*;W^{1,\infty}(\R^d))}, \text{ and }~ ||\na \log \rho||_{L^\infty((0,T^*)\times\R^d)},
$$
such that
\begin{align}\label{eq:main}
		&\int_0^{T^*}\int_{\R^d} \frac{\vr^\eps}{2}\left|u^\eps - u\right|^2 + \int_{\vr}^{\vr^\eps}\frac{\vr^\eps -z}{z}~dz~  dxdt \\
		&\quad + \frac{1}{2}\int_0^{T^*}\!\!\!\int_{\R^d} \int_{\R^d} K(x,y)\vr^\eps(x)\vr^\eps(y)\left[(u^\eps(x) - u(x)) - (u^\eps(y) - u(y))\right]^2~dxdydt \nonumber\\
		&\quad \leq C\sqrt{\eps},		\nonumber
\end{align}
where 
$$\vr^\eps= \int_{\R^d} f^\eps\, dv , \qquad \vr^\eps u^\eps= \int_{\R^d} vf^\eps\, dv.$$

\noindent
Moreover, any sequence of functions satisfying (\ref{eq:main}) satisfies:
\begin{align*}
	f^\eps &\overset{\eps \rightarrow 0}{\longrightarrow} \vr e^{-\frac{|v-u|^2}{2}} \text{ a.e and }L^1_{loc}(0,T^*; L^1(\R^d \times \R^d)), \\
	\vr^\eps &\overset{\eps \rightarrow 0}{\longrightarrow} \vr \text{ a.e and }L^1_{loc}(0,T^*; L^1(\R^d)), \\
	\vr^\eps u^\eps &\overset{\eps \rightarrow 0}{\longrightarrow} \vr u \text{ a.e and }L^1_{loc}(0,T^*; L^1(\R^d)), \\
	\vr^\eps |u^\eps|^2 &\overset{\eps \rightarrow 0}{\longrightarrow} \vr u^2\text{ a.e and }L^1_{loc}(0,T^*; L^1(\R^d)).
\end{align*}

\end{theorem}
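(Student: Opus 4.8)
The plan is to run a relative entropy (modulated energy) argument comparing the kinetic solution $f^\eps$ with the local Maxwellian $M_{\vr,u}=\frac{\vr}{(2\pi)^{d/2}}e^{-|v-u|^2/2}$ built from the strong Euler-flocking solution $(\vr,u)$. The natural object is the modulated entropy
\[
\mathcal R^\eps(t)=\F(f^\eps)-\E(\vr,u)-\int_{\R^d}\Big[\Big(\log\vr+1+\Phi-\tfrac{|u|^2}{2}\Big)(\vr^\eps-\vr)+u\cdot(\vr^\eps u^\eps-\vr u)\Big]\,dx,
\]
that is, $\F(f^\eps)$ minus the first-order Taylor expansion of $\E$ around $(\vr,u)$ evaluated at the macroscopic moments $(\vr^\eps,\vr^\eps u^\eps)$. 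A direct computation (completing the square, after which the affine term $\vr\Phi$ cancels) shows that the linearized quantity is exactly the first integrand in \eqref{eq:main}, so by the minimization principle \eqref{eq:min},
\[
\int_{\R^d}\frac{\vr^\eps}{2}|u^\eps-u|^2+\int_{\vr}^{\vr^\eps}\frac{\vr^\eps-z}{z}\,dz\,dx\;=\;\E(\vr^\eps,u^\eps)-(\text{lin.})\;\le\;\mathcal R^\eps(t).
\]
Since $f_0=M_{\vr_0,u_0}$ we have $\F(f_0)=\E(\vr_0,u_0)$ and hence $\mathcal R^\eps(0)=0$, so it suffices to prove $\mathcal R^\eps(t)\le C\sqrt\eps$ together with the alignment dissipation.

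First I would differentiate $\mathcal R^\eps$ in time using three ingredients: the kinetic entropy inequality \eqref{eq:entropyfinal2} for $\F(f^\eps)$, the Euler entropy equality \eqref{eq:entropyEuler} for $\E(\vr,u)$, and the weak formulation \eqref{eq:weak} of \eqref{eq:eq2} together with the strong form of \eqref{eq:cont}-\eqref{eq:moment} for the linear and cross terms (tested, after a standard truncation/regularization since $u$ and $\log\vr$ are not compactly supported and $u^\eps$ is not integrable near vacuum). The transport and pressure contributions should reorganize into a relative-entropy flux plus terms proportional to $\mathcal R^\eps$, controlled by $\|u\|_{W^{1,\infty}}$ and $\|\na\log\vr\|_{L^\infty}$ from \eqref{eq:bdu}-\eqref{eq:bdlog}. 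The Cucker--Smale terms produced by the three ingredients combine, after symmetrizing $K$, into the perfect square $\frac12\int\!\int K\vr^\eps\vr^\eps|(u^\eps(x)-u(x))-(u^\eps(y)-u(y))|^2$ which is precisely the desired dissipation, up to a weight-mismatch remainder $\frac12\int\!\int K(\vr\vr-\vr^\eps\vr^\eps)|u(x)-u(y)|^2$ absorbed into $C\mathcal R^\eps$ using boundedness of $K$ and Lipschitz continuity of $u$.

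The heart of the argument, and the step I expect to be the main obstacle, is the momentum-flux term $\int_{\R^d}\na_x u:\int_{\R^d}v\otimes v\,f^\eps\,dv\,dx$. Writing $\int v\otimes v f^\eps=\vr^\eps u^\eps\otimes u^\eps+\vr^\eps\CI+E^\eps$ with $E^\eps=\int(v-u^\eps)\otimes(v-u^\eps)f^\eps-\vr^\eps\CI$, the first two pieces feed the relative-entropy flux and the pressure terms, while $E^\eps$ measures the defect from being Maxwellian. Integrating by parts in $v$ gives $E^\eps=\int(v-u^\eps)\otimes r^\eps\,dv$ with $r^\eps=\na_v f^\eps+f^\eps(v-u^\eps)$, whence Cauchy--Schwarz (in $v$, then in $t$) and the definition of $D_1$ yield
\[
\int_0^{T^*}\!\!\int_{\R^d}|\na_x u|\,|E^\eps|\,dx\,dt\le \|\na u\|_\infty\Big(\int_0^{T^*}\!\!\int|v-u^\eps|^2f^\eps\Big)^{1/2}\Big(\int_0^{T^*}D_1(f^\eps)\Big)^{1/2}.
\]
Since the kinetic energy is bounded by the entropy and \eqref{eq:entropyfinal2} gives $\int_0^{T^*}D_1(f^\eps)\le C\eps$, this contributes the advertised $O(\sqrt\eps)$. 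The remaining creation term $C\eps\int_0^t\F(f^\eps)$ in \eqref{eq:entropyfinal2} is likewise $O(\sqrt\eps)$. Collecting everything yields $\mathcal R^\eps(t)+\frac12\int_0^t\!\int\!\int K\vr^\eps\vr^\eps|\cdots|^2\le C\int_0^t\mathcal R^\eps(s)\,ds+C\sqrt\eps$, and Grönwall's lemma closes \eqref{eq:main}.

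Finally, the convergences follow from \eqref{eq:main}. The density relative entropy $\int_{\vr}^{\vr^\eps}\frac{\vr^\eps-z}{z}\,dz$ is bounded below by $c\min(|\vr^\eps-\vr|^2,|\vr^\eps-\vr|)$, giving $\vr^\eps\to\vr$ in $L^1_{loc}$; combined with $\int\frac{\vr^\eps}{2}|u^\eps-u|^2\to0$ this yields $\vr^\eps u^\eps\to\vr u$ and $\vr^\eps|u^\eps|^2\to\vr|u|^2$. For $f^\eps$ itself, the bound $\int_0^{T^*}D_1(f^\eps)\le C\eps\to0$ is a vanishing relative Fisher information $\int f^\eps|\na_v\log(f^\eps/M_{\vr^\eps,u^\eps})|^2$; the logarithmic Sobolev inequality for the Gaussian and the Csisz\'ar--Kullback--Pinsker inequality then force $f^\eps-M_{\vr^\eps,u^\eps}\to0$ in $L^1$, while $M_{\vr^\eps,u^\eps}\to M_{\vr,u}$ by the macroscopic convergences, so $f^\eps\to\vr e^{-|v-u|^2/2}$ in $L^1_{loc}$ (and a.e.\ up to extraction).
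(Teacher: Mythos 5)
Your overall architecture coincides with the paper's: a Dafermos-type relative entropy computation (your $\mathcal{R}^\eps$ is, via the minimization principle \eqref{eq:min}, the paper's $\int\opf{E}(U^\eps|U)\,dx$ combined with the argument of Lemma \ref{lem:negative}), with the momentum-flux defect controlled by $D_1(f^\eps)^{1/2}$ exactly as in Lemma \ref{lem:kineticapproximation} — your identity $E^\eps=\int(v-u^\eps)\otimes r^\eps\,dv$ is precisely the Mellet--Vasseur computation the paper uses — and Gronwall closing \eqref{eq:main}. Your ending for the Maxwellian convergence (vanishing relative Fisher information, log-Sobolev, Csisz\'ar--Kullback) is a legitimate alternative to the paper's, which instead passes to the limit in \eqref{eq:entropyfinal}, subtracts the Euler entropy equality \eqref{eq:entropyEuler}, and invokes convexity/equality in \eqref{eq:min}; your route is even quantitative where the paper's is purely qualitative.

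There is, however, a genuine gap in your bookkeeping of the Cucker--Smale terms. You claim the perfect square emerges up to a weight-mismatch remainder $\frac{1}{2}\iint K(\vr\vr-\vr^\eps\vr^\eps)|u(x)-u(y)|^2\,dx\,dy$, ``absorbed into $C\mathcal{R}^\eps$ using boundedness of $K$ and Lipschitz continuity of $u$.'' This term is \emph{linear} in the density deviation, and no such absorption is possible: the only available estimate is
\begin{equation*}
\int_{\R^d}|\vr^\eps-\vr|\,dx\le (2M)^{1/2}\left(\int_{\R^d} p(\vr^\eps|\vr)\,dx\right)^{1/2},
\end{equation*}
by Cauchy--Schwarz with the weight $\min\{1/\vr^\eps,1/\vr\}$ and \eqref{eq:relativepressure}, which gives only $C\sqrt{\mathcal{R}^\eps}$ (Lipschitz continuity of $u$ is useless here since $K$ has no decay). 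With a $\sqrt{\mathcal{R}^\eps}$ term on the right-hand side, the Gronwall step yields a bound that is $O(1)$ in $\eps$ and grows in $t$: both the rate $\sqrt\eps$ and the smallness are lost. The correct accounting — carried out in the paper's computation of $J_3$, see \eqref{J3}--\eqref{J32} — keeps the natural mixed weight $\vr^\eps(x)\vr(y)$ in the cross term and splits $\vr(y)=\vr^\eps(y)+(\vr(y)-\vr^\eps(y))$; then the piece $\frac{1}{2}\iint K\vr^\eps\vr^\eps[u^\eps(x)-u^\eps(y)]^2$ is cancelled exactly by the kinetic dissipation in \eqref{eq:entropyfinal2} (this is Lemma \ref{lem:negative}), and the only surviving remainder is
\begin{equation*}
\int_{\R^d}\int_{\R^d} K(x,y)\,\vr^\eps(x)\left(\vr(y)-\vr^\eps(y)\right)[u(y)-u(x)]\cdot[u^\eps(x)-u(x)]\,dx\,dy,
\end{equation*}
which is \emph{quadratic} in the deviations (one density factor, one velocity factor) and hence genuinely bounded by $C\|u\|_{L^\infty}(\|\vr\|_{L^1}+\|\vr^\eps\|_{L^1})\int_{\R^d}\opf{E}(U^\eps|U)\,dx$, as in Lemma \ref{lem:theotherguy}. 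Your proof closes once this replaces your claimed mismatch term; as written, the Gronwall argument fails.
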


This theorem will be a direct consequence of Proposition  \ref{prop:relative}  below, the proof of which is the object of Section \ref{sec:proof}.

\section{Proof of Theorem \ref{thm:main}}\label{sec:proof}
To reduce the amount of notations needed 
in the proof of Theorem \ref{thm:main} it will be preferable 
to write the Euler-Flocking system in terms of the conservative quantities. 
In our case, the conservative quantities are the density $\vr$ and the momentum $P=\vr u$.
If we denote
$$
U=\begin{pmatrix}
		\vr \\ P
	\end{pmatrix},
$$
we can rewrite the system \eqref{eq:cont}-\eqref{eq:moment} as
\begin{equation}\label{def:vec}
	U_t + \Div_x A(U) = F(U).
\end{equation}
The flux and source term are then given by
\begin{equation*}
	A(U) = \begin{pmatrix}
		P & 0 \\
		\frac{P \otimes P}{\vr} & \vr
	\end{pmatrix},\quad
	F(U) = \begin{pmatrix}
		0 \\
		\vr \tilde P - \tilde \vr P - \vr \Grad_x \Phi
	\end{pmatrix},
\end{equation*}
where we have introduced the notation $\tilde g = \int_{\R^d} K(x,y) g(y)\, dy$.
The entropy $E$ corresponding to \eqref{def:vec}  reads
\begin{equation*}
	E(U) = \frac{P^2}{2\vr} + \vr \log \vr + \vr \Phi,
\end{equation*}
and the \emph{relative entropy} is the quantity
$$
	\opf{E}(V|U) = E(V) - E(U) - dE(U)(V-U),
$$
where $d$ stands for the derivation with respect to the variables $(\vr,P)$.

\medskip

For the system  \eqref{eq:cont}-\eqref{eq:moment}, a simple computation yields
\begin{equation*}
	\begin{split}
		-dE(U)(V - U) &=  
		 -\begin{pmatrix}
			-\frac{P^2}{2\vr^2} + \log \vr + 1 + \Phi \\
			\frac{P}{\vr}
		\end{pmatrix}
		\begin{pmatrix}
			q - \vr \\ Q - P
		\end{pmatrix} \\
		&= \frac{q|u|^2}{2} - \frac{\vr |u|^2}{2} + (\vr - q)(\log \vr + 1 + \Phi) 
		 + \vr u^2 -q u v.
	\end{split}
\end{equation*}
Conseqently,  the relative entropy can alternatively be written
\begin{equation}\label{eq:relativecontrol}
	\begin{split}
		\opf{E}(V| U) 
		&= E(V) - E(U) - dE(U)(V - U) \\
		&= q \frac{|v|^2}{2} - \vr \frac{|u|^2}{2} + q\log q - \vr\log \vr + \Phi (q-\vr)\\
		&\qquad +  \frac{q |u|^2}{2} - \frac{\vr |u|^2}{2} + (\vr - q)(\log \vr + 1 + \Phi) 
		 + \vr u^2 - q u v \\
		&= q \frac{\left|v - u\right|^2}{2} + p(q|\vr),
	\end{split}
\end{equation}
where we have introduced the relative pressure
$$ p(q|\vr) =  q\log q - \vr\log \vr+ (\vr -q)(\log \vr + 1) = \int_{\vr}^{q}\frac{q -z}{z}~dz.$$
Note that the relative pressure controls the $L^2$ norm of the difference
\begin{equation}\label{eq:relativepressure}
 p(q|\vr)  \geq \frac{1}{2}\min\left\{\frac{1}{q(x)}, \frac{1}{\vr(x)}\right\} (q(x)-\vr(x))^2.
\end{equation}

With the newly introduced notation, Theorem \ref{thm:main}
can be recast as a direct consequence of the following proposition:
\begin{proposition}\label{prop:relative}
Under the assumptions of Theorem \ref{thm:main}, 
let 
$$
U=\begin{pmatrix}
		\vr  \\ \vr  u 
	\end{pmatrix}
$$ denote the strong solution to the Euler system of equations \eqref{eq:cont}-\eqref{eq:moment} and let
	\begin{equation*}
	U^\epsilon = \begin{pmatrix}
		\vr^\epsilon \\ \vr^\epsilon u^\eps
	\end{pmatrix},
	\qquad 
	\vr^\epsilon = \int_{\R^d}f^\epsilon~ dv, 
	\qquad 
	\vr^\eps u^\eps = \int_{\R^d}f^\epsilon v~ dv,
\end{equation*}
be the macroscopic quantities corresponding to the weak solution of the kinetic equation \eqref{eq:eq2}.

The following inequality holds:
\begin{equation}\label{eq:final}
	\begin{split}
		& \int_{\R^d} \opf{E}(U^\epsilon | U)(t)\, dx \\
		&\qquad +\frac{1}{2}\int_0^t\int_{\R^d} \int_{\R^d} K(x,y)\vr^\eps(x)\vr^\eps(y)\left[(u^\eps(x) - u(x)) - (u^\eps(y) - u(y))\right]^2\, dx\, dy\, ds \\
		&\qquad \leq C\int_0^t\int_{\R^d} \opf{E}(U^\eps|U)\, dx\, ds + C\sqrt{\eps}.
	\end{split}
\end{equation}
\end{proposition}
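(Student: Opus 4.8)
\emph{Strategy.} The plan is to run a relative entropy (modulated energy) argument comparing the macroscopic moments $U^\eps=(\vr^\eps,\vr^\eps u^\eps)$ of the kinetic solution with the strong solution $U=(\vr,\vr u)$. The difficulty is that $f^\eps$ is only a weak solution, so $\E(\vr^\eps,u^\eps)$ cannot be differentiated in time; only the kinetic entropy $\F(f^\eps)$ obeys the usable inequality \eqref{eq:entropyfinal2}. I would therefore first invoke the minimization principle \eqref{eq:min}, $\E(\vr^\eps,u^\eps)\le\F(f^\eps)$, to bound
$$\int_{\R^d}\opf{E}(U^\eps|U)\,dx \le \mathcal{H}(t):=\F(f^\eps)-\E(\vr,u)-\int_{\R^d} dE(U)\cdot(U^\eps-U)\,dx,$$
and then estimate $\mathcal{H}(t)$. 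Since $f_0$ is exactly the Maxwellian built from $(\vr_0,u_0)$, one has $\F(f_0)=\E(\vr_0,u_0)$ and $U^\eps(0)=U(0)$, so $\mathcal{H}(0)=0$ and it suffices to control $\mathcal{H}(t)=\int_0^t \tfrac{d}{ds}\mathcal{H}\,ds$.

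\emph{Differentiating the pieces.} I would treat the three contributions separately. The term $\F(f^\eps(t))-\F(f_0)$ is bounded above using \eqref{eq:entropyfinal2}, which supplies the favorable dissipation $-\tfrac1{2\eps}\int_0^tD_1(f^\eps)$ and $-\tfrac12\int_0^t\!\int\!\int K\vr^\eps\vr^\eps|u^\eps(x)-u^\eps(y)|^2$, at the cost of $C\eps\int_0^t\F(f^\eps)$; since \eqref{eq:entropyfinal} keeps $\F(f^\eps)$ uniformly bounded on $[0,T^*]$, this last term is $O(\eps)\le C\sqrt\eps$. The term $\E(\vr,u)(t)-\E(\vr_0,u_0)$ is computed exactly from the Euler entropy equality \eqref{eq:entropyEuler}, producing $+\tfrac12\int_0^t\!\int\!\int K\vr\vr|u(x)-u(y)|^2$. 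The genuinely laborious term is the cross term $\int dE(U)\cdot(U^\eps-U)$: writing $\Psi(U)=dE(U)$, I would expand $\tfrac{d}{dt}\int\Psi(U)\cdot(U^\eps-U)$ using the strong Euler system for $\partial_t U$ and the moment equations \eqref{eq:cont0}, \eqref{eq:mom} for $\partial_t U^\eps$ (the latter read off from the weak formulation of \eqref{eq:eq2} tested against the smooth fields built from $\Psi(U)$), integrating by parts to move all spatial derivatives onto the smooth $U$.

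\emph{The stress error and the $\sqrt\eps$.} The key new ingredient compared with a standard hyperbolic relative-entropy estimate is that $U^\eps$ is only an approximate solution of \eqref{def:vec}: the kinetic stress is not the equilibrium one. Setting $r^\eps:=\na_v f^\eps+f^\eps(v-u^\eps)$, so that $D_1(f^\eps)=\int|r^\eps|^2/f^\eps$, a single integration by parts in $v$ gives the identity
$$\int_{\R^d} v\otimes v\,f^\eps\,dv=\vr^\eps u^\eps\otimes u^\eps+\vr^\eps\mathbb{I}+\mathcal{S}^\eps,\qquad \mathcal{S}^\eps:=\int_{\R^d}(v-u^\eps)\otimes r^\eps\,dv,$$
so $U^\eps$ solves \eqref{def:vec} up to the extra flux $\Div_x\mathcal{S}^\eps$. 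By Cauchy--Schwarz, $|\mathcal{S}^\eps(x)|\le(\int|v-u^\eps|^2f^\eps\,dv)^{1/2}(\int|r^\eps|^2/f^\eps\,dv)^{1/2}$; testing $\Div_x\mathcal{S}^\eps$ against $\na_x\Psi(U)$ (bounded by \eqref{eq:bdu}) and applying Cauchy--Schwarz in $(x,s)$ then yields a bound by $C(\int_0^t\!\int|v-u^\eps|^2f^\eps)^{1/2}(\int_0^tD_1(f^\eps))^{1/2}$. The first factor is controlled by the uniformly bounded energy and the second is $\le C\sqrt\eps$ by \eqref{eq:entropyfinal2}; this is precisely the origin of the $C\sqrt\eps$ on the right of \eqref{eq:final}.

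\emph{Assembling and the main obstacle.} Once all derivatives are expanded, I expect the convective/pressure part to organize, as usual, into $-\int\na_x\Psi(U):[A(U^\eps)-A(U)-dA(U)(U^\eps-U)]$, a quadratic-in-$(U^\eps-U)$ expression dominated by $C\|\na_x u\|_\infty\|\na_x\log\vr\|_\infty\int_{\R^d}\opf{E}(U^\eps|U)$ thanks to the convexity of $E$ and the regularity \eqref{eq:bdu}--\eqref{eq:bdlog}. The alignment and potential source terms must then be recombined: the three $K$-contributions (the $-\tfrac12 K\vr^\eps\vr^\eps|u^\eps(x)-u^\eps(y)|^2$ from $\F$, the $+\tfrac12 K\vr\vr|u(x)-u(y)|^2$ from $\E$, and the cross terms from $\int\Psi(U)\cdot(U^\eps-U)$) should complete the square into $-\tfrac12\int\!\int K\vr^\eps\vr^\eps[(u^\eps(x)-u(x))-(u^\eps(y)-u(y))]^2$, the dissipation appearing on the left of \eqref{eq:final}. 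I expect the main obstacle to be exactly this bookkeeping: the $K$-terms do not match on the nose because of the mismatch between $\vr$ and $\vr^\eps$ (and between $u$ and $u^\eps$) in the various weights, and one must show that every leftover is either the perfect square that is kept, or a remainder bounded by $C\int_{\R^d}\opf{E}(U^\eps|U)\,dx$ using $\|K\|_\infty$, \eqref{eq:relativepressure} and the regularity of $U$. Collecting the favorable dissipation on the left, the relative-entropy-controlled remainders on the right, and the $C\sqrt\eps$ error then gives \eqref{eq:final}.
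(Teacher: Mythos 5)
Your proposal is correct and is essentially the paper's own argument: expanding your cross term $\int dE(U)\cdot(U^\eps-U)$ in time recovers the relative-entropy inequality of Proposition \ref{prop:realtive entropy}/Corollary \ref{cor:ree}, and your four mechanisms (minimization principle \eqref{eq:min} with well-prepared data combined with \eqref{eq:entropyfinal2}; control of the relative flux by the relative entropy; the stress residual $\mathcal{S}^\eps=\int(v-u^\eps)\otimes r^\eps\,dv$ bounded through $D_1$, which is the source of the $C\sqrt{\eps}$; and the $K$-term square completion with a density-mismatch leftover) correspond one-to-one to Lemmas \ref{lem:negative}, \ref{lem:flux}, \ref{lem:kineticapproximation} and \ref{lem:theotherguy}. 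The leftover you flag as the main obstacle is precisely the paper's bilinear term $\int\!\int K(x,y)\,\vr^\eps(x)(\vr(y)-\vr^\eps(y))[u(y)-u(x)][u^\eps(x)-u(x)]\,dx\,dy$, which Lemma \ref{lem:theotherguy} bounds by $C\int_{\R^d}\opf{E}(U^\eps|U)\,dx$ via Cauchy--Schwarz and \eqref{eq:relativepressure}, exactly as you anticipated.
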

The proof of this proposition
relies on several auxiliary results 
which will be stated and proved 
throughout this section. 
At the end of the section, in Section \ref{subsec:prop}
we close the arguments and conclude the proof. 
However, before we continue, let us first convince the reader  that Proposition \ref{prop:relative}
actually yields Theorem \ref{thm:main}.
\begin{proof}[Proof of Theorem \ref{thm:main}]
	Let us for the moment 
	take Proposition \ref{prop:relative} for granted. Then, the main inequality 
	\eqref{eq:main} follows from \eqref{eq:final} and Gronwall's lemma.

We now need to show that \eqref{eq:main} implies the stated convergence.
First, we note that the entropy estimate \eqref{eq:entropyfinal} implies that $f^\eps$ is bounded in $L\log L$ and thus converges weakly to some $f$.

Next, in view of \eqref{eq:relativecontrol} and \eqref{eq:relativepressure}, the main inequality \eqref{eq:main} yields
$$ \int_0^{T^*}\int_{\R^d} \min\left\{ \frac{1}{\rho^\eps},\frac{1}{\rho}\right\} |\rho^\eps -\rho|^2 \, dx\, dt \longrightarrow 0$$
and 
$$ \int_0^{T^*}\int_{\R^d} \rho^\eps  |u^\eps -u|^2 \, dx\, dt \longrightarrow 0.$$

In particular, we get:
\begin{align*}
\int_{\R^d} |\rho^\eps -\rho| \, dx & = \int_{\R^d} \min\left\{ \frac{1}{\rho^\eps},\frac{1}{\rho}\right\}^{1/2} \max\{\rho^\eps,\rho\}^{1/2} |\rho^\eps -\rho| \, dx \\
& \leq  \left(\int_{\R^d} \min\left\{ \frac{1}{\rho^\eps},\frac{1}{\rho}\right\} |\rho^\eps -\rho|^2 \, dx\right)^{1/2}  \left( \int_{\R^d} \max\{\rho^\eps,\rho\}dx\right)^{1/2}  \\
& \leq  \left(\int_{\R^d} \min\left\{ \frac{1}{\rho^\eps},\frac{1}{\rho}\right\} |\rho^\eps -\rho|^2 \, dx\right)^{1/2}  (2M)^{1/2}.
\end{align*}
Hence, we can conclude that
$$	\vr^\eps \overset{\eps \rightarrow 0}{\longrightarrow} \vr \text{ a.e and }L^1_{loc}(0,T^*; L^1(\R^d)), 
$$

Similary, we see that
\begin{equation*}
	\begin{split}
		\int_{\R^d}|\vr^\eps u^\eps - \vr u|~dx 
		&\leq \int_{\R^d}|\vr^\eps(u^\eps - u)| + |(\vr^\eps - \vr)u|~dx \\
		&\leq M^\frac{1}{2}\left(\int_{\R^d}\vr^\eps |u^\eps - u|^2~dx\right) \\
		&\quad\! + \left(\int_{\R^d} \min\left\{ \frac{1}{\rho^\eps},\frac{1}{\rho}\right\} |\rho^\eps -\rho|^2 \, dx\right)^{1/2}  \left( \int_{\R^d} \max\{\rho^\eps,\rho\}u^2~dx\right)^{1/2}.
	\end{split}
\end{equation*}
Consequently, also
$$	\vr^\eps u^\eps \overset{\eps \rightarrow 0}{\longrightarrow} \vr u \text{ a.e and }L^1_{loc}(0,T^*; L^1(\R^d)).
$$

Moreover, by writing
$$ \rho^\eps {u^\eps}^2 - \rho u^2 = \rho^\eps(u^\eps-u)^2 + 2u(\rho^\eps u^\eps -\rho u)+u^2 (\rho-\rho^\eps)$$ 
we easily deduce
	$$ \vr^\eps |u^\eps|^2 \overset{\eps \rightarrow 0}{\longrightarrow} \vr u^2\text{ a.e and }L^1_{loc}(0,T^*; L^1(\R^d)).
$$	 

At this stage, it only remains to prove that $f$ has the stated maxwellian form.
For this purpose, we first send $\epsilon \rightarrow 0$ in the entropy inequality \eqref{eq:entropyfinal}
and use the convergence of $\vr^\eps u^\eps$ and $\vr^\eps |u^\eps|^2$ to obtain
\begin{equation}\label{eq:final}
	\begin{split}
		&\lim_{\eps \rightarrow 0}\int_{\R^d}f^\eps \frac{v^2}{2}+ f^\eps\log f^\eps~dx + \frac{1}{2}\int_0^t\int_{\R^d}K(x,y)\vr(x)\vr(y)(u(y) - u(x))^2~dxdydt \\
		&\qquad \leq \mathcal{E}(\vr_0, u_0),
	\end{split}
\end{equation}
where we have used that $f_0 = \frac{\vr_0}{(2\pi)^{d/2}} e^\frac{-(v-u_0)^2}{2}$ to conclude the last inequality. 
Next, we subtract the entropy equality \eqref{eq:entropyEuler} from \eqref{eq:final} 
to discover
\begin{equation}\label{eq:final2}
	\begin{split}
		& 0 \leq \lim_{\eps \rightarrow 0}\int_{\R^{2d}}f^\eps \frac{v^2}{2}+ f^\eps\log f^\eps~dxdv - \int_{\R^d}\vr\frac{u^2}{2}+ \vr \log \vr~dx  \leq 0,
	\end{split}
\end{equation}
where the first inequality is \eqref{eq:min}. By convexity of the entropy, we conclude that
\begin{equation*}
	f = \frac{\vr}{(2\pi)^\frac{d}{2}} e^{-\frac{(u-v)^2}{2}},
\end{equation*}
which concludes the proof of Theorem \ref{thm:main}.
\end{proof}


\subsection{The relative entropy inequality}
The fundamental ingredient in the proof 
of Proposition \ref{prop:relative} 
is a relative entropy inequality 
for the system  \eqref{eq:cont} - \eqref{eq:moment}
which we will derive in this subsection.
However, before we embark on the derivation of this inequality, 
we will need some additional identities and simplifications.

First, we recall that $E$ is an entropy  due to the existence an entropy flux function $Q$ such that
\begin{equation}\label{def:Q0}
	d_j  Q_i(U) = \sum_{k}d_j A_{ki}(U)d_k E(U), \quad i,j=1, \ldots, 2.
\end{equation}
We then have
\begin{equation}
	E(U)_t + \Div_x Q(U) = (-\tilde \vr P + \vr \tilde P)\frac{P}{\vr}.
\end{equation}

Since the confinement potential term  $\Phi \vr$ in the entropy is linear, it does not play any role in the relative entropy. It is thus convenient to 
 introduce the reduced entropy functional
$$ 
\wE(U) = \frac{P^2}{2\vr} + \vr \log \vr.
$$
This allows us to treat the contribution of $\Phi\vr$ as a forcing term (which is part of the $F(U)$) in the proof of Proposition \ref{prop:realtive entropy}.
The reduced entropy flux  $\widehat Q(U)$ is then defined by
\begin{equation}\label{def:Q}
	d_j\widehat Q_i(U) = \sum_{k}d_j A_{ki}(U)d_k \wE(U), \quad i,j=1, \ldots, 2.
\end{equation}

We note that $\widehat  Q$ satisfies
\begin{equation}\label{eq:Qh}
\Div_x \widehat Q(u) = d\widehat E(U) \left(\Div_x A(U)\right),
\end{equation}
while the total entropy flux, defined by   $Q(U) = \widehat{Q}(U) + P \Phi$, satisfies
$$
\Div_x Q(U) = \left(\Div_x A(U)
+ \begin{pmatrix}
0 \\
\vr \Grad_x \Phi	
\end{pmatrix}\right)
dE(U).
$$

We shall also need the relative flux:  
\begin{equation}
	\opf{A}(V|U) = A(V) - A(U) - dA(U)(V-U), \nonumber
\end{equation}
where the last term is to be understood as
\begin{equation}
	\left[dA(U)(V-U)\right]_{i} = dA_i(U)\cdot(V-U), \quad i=1. \ldots, d. \nonumber
\end{equation}


The key relative entropy inequality is given by the following proposition:
\begin{proposition}\label{prop:realtive entropy}
Let $U=\begin{pmatrix}
		\vr \\ \vr u
	\end{pmatrix}$ be a strong solution of \eqref{def:vec} satisfying \eqref{eq:entropyEuler} and let $V=\begin{pmatrix}
		q \\ Q = q v
	\end{pmatrix}$ be an arbitrary smooth function. 
The following inequality holds
\begin{equation}
	\begin{split}
		&\frac{d}{dt}\int_{\R^d} \opf{E}(V|U)~dx \\
		&\qquad +\frac{1}{2}\int_{\R^d} \int_{\R^d} K(x,y)q(x)q(y)\left[(v(x) - u(x)) - (v(y) - u(y))\right]^2~dxdy \\
		&\qquad \leq  \int_{\R^d}\left[ \pa_t \wE(V) + Q \Grad_x \Phi + \frac{1}{2}\int_{\R^d}  K(x,y)q(x)q(y)[v(x) - v(y)]^2~dy\right]dx\\
		& \qquad \qquad -\int_{\R^d} \Grad_x(d\wE(U)):\opf{A}(V|U)~dx\\
		&\qquad \qquad 
		-\int_{\R^d} d\wE(U)\left[V_t + \Div A(V) - F(V)\right]~dx \\
		&\qquad \qquad +\int_{\R^d}\int_{\R^d} K(x,y)q(x)(\vr(y)-q(y))[u(y) - u(x)][v(x) - u(x)] ~dxdy
 	\end{split}\nonumber
\end{equation}
\end{proposition}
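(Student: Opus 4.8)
The plan is to perform a Dafermos-type relative entropy computation for the system \eqref{def:vec}, using that $U$ is a strong solution satisfying the entropy equality \eqref{eq:entropyEuler} while $V=(q,qv)$ is only smooth, so that $V_t+\Div A(V)-F(V)$ survives as a genuine residual measuring the failure of $V$ to solve \eqref{def:vec}. With both $U$ and $V$ smooth the computation is in fact an identity, the inequality being all that is needed later. Throughout I would exploit the splitting $E=\wE+\vr\Phi$, $Q=\widehat Q+P\Phi$, which carries the (linear) confinement term as a forcing and lets me work with the reduced pair $(\wE,\widehat Q)$; since the relative flux $\opf{A}(V|U)$ and the first row of $A$ are unaffected by the potential, this is exactly why $d\wE(U)$ rather than $dE(U)$ appears in the statement.

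\emph{Steps 1--2 (relative entropy calculus).} Expanding $\opf{E}(V|U)=E(V)-E(U)-dE(U)(V-U)$ and differentiating in time gives
$$\Pt\opf{E}(V|U)=\Pt E(V)-\Pt E(U)-[d^2E(U)U_t](V-U)-dE(U)\,\Pt(V-U).$$
I then substitute $U_t=-\Div A(U)+F(U)$ and rewrite $\Pt(V-U)=V_t+\Div A(U)-F(U)$ as $\big(V_t+\Div A(V)-F(V)\big)+\Div(A(U)-A(V))-(F(U)-F(V))$. This exposes the residual term $-\int d\wE(U)[V_t+\Div A(V)-F(V)]\,dx$ (the potential part of $dE(U)$ here cancelling against the integration by parts of $\int\Phi\,\Pt q\,dx$ coming from $\Pt E(V)=\Pt\wE(V)+\Phi\,\Pt q$, which also produces the forcing $\int Q\cdot\Grad_x\Phi\,dx$). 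The two terms carrying spatial derivatives of the flux, namely $[d^2E(U)\Div A(U)](V-U)$ and $-dE(U)\Div(A(U)-A(V))$, are integrated over $\R^d$ and integrated by parts; using the symmetry of $d^2\wE(U)\,d_kA(U)$ guaranteed by the compatibility \eqref{def:Q}--\eqref{eq:Qh}, they collapse into $-\int\Grad_x(d\wE(U)):\opf{A}(V|U)\,dx$, all boundary terms vanishing by the decay of the strong solution (Theorem \ref{thm:Euler}) and the smoothness of $V$.

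\emph{Step 3 (source terms and completion of the square).} It remains to collect the nonlocal alignment contributions: the one arising from $\Pt E(U)$ through the entropy equality \eqref{eq:entropyEuler}, those coming from the linearization of the source, $-[d^2E(U)F(U)](V-U)$ and $dE(U)(F(U)-F(V))$, and the $F(V)$ alignment term sitting inside the residual. Writing $\tilde g(x)=\int_{\R^d} K(x,y)g(y)\,dy$, symmetrizing every double integral via $K(x,y)=K(y,x)$, and using the identity
$$[(v-u)(x)-(v-u)(y)]^2=[v(x)-v(y)]^2-2[v(x)-v(y)][u(x)-u(y)]+[u(x)-u(y)]^2,$$
these quadratic forms reassemble into the dissipation $\tfrac12\int\!\int Kq(x)q(y)[(v(x)-u(x))-(v(y)-u(y))]^2$ on the left, together with the remainders $\tfrac12\int\!\int Kq(x)q(y)[v(x)-v(y)]^2$ and $\int\!\int Kq(x)(\vr(y)-q(y))[u(y)-u(x)][v(x)-u(x)]$ on the right. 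This last reorganization is the main obstacle: one must track every cross term in $u,v,\vr,q$ under the kernel $K$, symmetrize correctly, and check that the terms of indefinite sign cancel exactly into the completed square plus the two stated remainders — a single misplaced factor of $\vr$ versus $q$, or an incorrect symmetrization, would destroy the identity. The remaining difficulties (vanishing of the boundary terms and the integration by parts yielding $\int Q\cdot\Grad_x\Phi\,dx$) are routine given the regularity and decay furnished by Theorem \ref{thm:Euler} for $U$ and the assumed smoothness of $V$.
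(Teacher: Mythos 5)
Your proposal is correct in substance and does reach the stated inequality, but it reorganizes the paper's computation in a genuinely different way, so a comparison is worthwhile. The paper works with the reduced entropy from the outset, writing $\opf{E}(V|U)=\wE(V)-\wE(U)-d\wE(U)(V-U)$, so that $\partial_t\wE(U)$ cancels \emph{algebraically} in the time-derivative expansion and the entropy equality \eqref{eq:entropyEuler} is never actually invoked in the proof: the only $A(U)$-only term, $J_2=-\int_{\R^d}\Grad_x(d\wE(U)):A(U)\,dx$, is killed by the flux compatibility, since by \eqref{eq:Qh} it equals $\int_{\R^d}\Div_x \widehat Q(U)\,dx=0$. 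You instead retain $\partial_t E(U)$, convert it via \eqref{eq:entropyEuler} into $+\frac12\iint K(x,y)\vr(x)\vr(y)|u(x)-u(y)|^2$, and pair the fluxes as $\Div_x(A(U)-A(V))$, which lets Lemma \ref{lem:dafermos} collapse the derivative terms directly onto $\opf{A}(V|U)$ with no $J_2$-type leftover; the price is an extra source term $+\int_{\R^d} d\wE(U)F(U)\,dx$, absent from the paper's $J_3$, which after symmetrizing in $K$ equals $-\frac12\iint K\vr(x)\vr(y)|u(x)-u(y)|^2-\int_{\R^d}P\cdot\Grad_x\Phi\,dx$ and precisely absorbs the entropy-equality contribution. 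The two mechanisms are two faces of the same compatibility \eqref{def:Q}; your route genuinely uses the hypothesis that $U$ satisfies \eqref{eq:entropyEuler}, while the paper's proof shows that this hypothesis is in fact dispensable for the proposition. One accounting slip to flag: your claim that the two flux terms collapse \emph{exactly} into $-\int_{\R^d}\Grad_x(d\wE(U)):\opf{A}(V|U)\,dx$ silently drops a potential contribution, because $\Grad_x(d_\vr E(U))=\Grad_x(d_\vr\wE(U))+\Grad_x\Phi$ while the first row of $A(U)-A(V)$ is $P-Q$, so the integration by parts of $-\int_{\R^d}dE(U)\,\Div_x(A(U)-A(V))\,dx$ leaves an extra $\int_{\R^d}\Grad_x\Phi\cdot(P-Q)\,dx$; this term is exactly what is needed, together with the $-\int_{\R^d} P\cdot\Grad_x\Phi\,dx$ above and the $+\int_{\R^d} Q\cdot\Grad_x\Phi\,dx$ hidden in the linearized source terms, for the $\Phi$-bookkeeping to close to the single $+\int_{\R^d}Q\cdot\Grad_x\Phi\,dx$ of the statement. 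This is precisely the class of slips you flagged yourself as the main obstacle; with that term restored your cancellation scheme closes and the argument is complete.
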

\begin{remark}
When $K=0$, such an inequality was established by Dafermos \cite{Dafermos-79} for general system of hyperbolic conservation laws.	
\end{remark}


In  order to prove  Proposition \ref{prop:realtive entropy},
we will need the following lemma (see Dafermos \cite{Dafermos-79}). 
\begin{lemma}\label{lem:dafermos}
The following integration by parts formula holds
\begin{equation}\label{magic}
	\begin{split}
		&\int_{\R^d} d^2 \hat E(U)\left(\Div_x A(U)\right)(V-U)~dx \\
		&\qquad \qquad= \int_{\R^d} \left(dA(U)(V-U)\right):\left(\Grad_xd  \hat E(U)\right)~dx,
	\end{split}
\end{equation}
where $:$ is the scalar matrix product.
\end{lemma}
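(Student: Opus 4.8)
The plan is to show that the two integrands in \eqref{magic} actually coincide \emph{pointwise}, so that the asserted formula is an algebraic consequence of the entropy structure rather than a genuine boundary computation. Writing $A^i$ for the $i$-th column (spatial direction) of the flux, the chain rule gives, for any smooth $U=U(x)$, the expansions $\Div_x A(U)=\sum_i dA^i(U)\,\partial_{x_i}U$ and, likewise, the $i$-th column of $\Grad_x\bigl(d\widehat E(U)\bigr)$ equals $d^2\widehat E(U)\,\partial_{x_i}U$. Both sides of \eqref{magic} are therefore first order in the spatial derivatives of $U$, carrying a single factor $\partial_{x_i}U$, which is precisely what makes a termwise comparison feasible.

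The key ingredient, and the crux of the whole argument, is a symmetry of the entropy Hessian that I would extract from the entropy--flux compatibility \eqref{def:Q}. Differentiating $d_j\widehat Q_i=\sum_k d_j A_{ki}\,d_k\widehat E$ once more with respect to the state variable $U_l$, and using that the mixed second derivatives of $\widehat Q_i$ and of $A_{ki}$ are symmetric in their lower indices, the two manifestly symmetric contributions cancel and leave
\[
\sum_k d_j A_{ki}\, d_l d_k \widehat E \;=\; \sum_k d_l A_{ki}\, d_j d_k \widehat E ,
\]
for each fixed spatial direction $i$. In matrix form this says exactly that $d^2\widehat E(U)\,dA^i(U)=\bigl(dA^i(U)\bigr)^{T} d^2\widehat E(U)$, i.e.\ $d^2\widehat E\,dA^i$ is a \emph{symmetric} matrix. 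This is the sole point at which the existence of an entropy flux enters.

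With this symmetry in hand the computation closes mechanically. Expanding the left-hand integrand of \eqref{magic} and using the chain-rule form of $\Div_x A(U)$ gives $\sum_i (V-U)^{T}\,d^2\widehat E(U)\,dA^i(U)\,\partial_{x_i}U$, while expanding the right-hand integrand, pairing the $i$-th columns identified above and summing the scalar matrix product $:$ over both indices, gives $\sum_i (V-U)^{T}\,\bigl(dA^i(U)\bigr)^{T} d^2\widehat E(U)\,\partial_{x_i}U$. The symmetry of the previous step identifies these two scalars pointwise, and integrating over $\R^d$ yields \eqref{magic}.

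The main obstacle is organizational rather than analytic: one must set up the multi-index bookkeeping so that the divergence (which differentiates the flux) and the spatial gradient (which differentiates $d\widehat E$) are both reduced to the common bilinear form above, and then apply the symmetry in exactly the correct slot. A pleasant by-product of this pointwise route is that no decay or boundary hypotheses are needed; had one instead integrated by parts literally, as in Dafermos \cite{Dafermos-79}, the argument would additionally require discarding boundary terms at infinity, which is legitimate here thanks to the smoothness and decay of the strong solution furnished by Theorem \ref{thm:Euler}, but the algebraic identity makes that step superfluous.
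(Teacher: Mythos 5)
Your proof is correct, and it takes a genuinely different route from the paper's. The paper (following Dafermos and Berthelin--Vasseur) also differentiates the compatibility relation \eqref{def:Q} once in $U_l$, but then keeps the second derivatives of $\widehat Q$ in play: it reorganizes the left-hand integrand into $\Div_x\bigl(d\widehat Q(U)(V-U)\bigr) - \Div_x\bigl(dA(U)(V-U)\bigr)d\widehat E(U)$ plus a residual that vanishes by \eqref{def:Q}, and then obtains \eqref{magic} by integrating over $\R^d$ — so the divergence term $\int \Div_x\bigl(d\widehat Q(U)(V-U)\bigr)\,dx$ must be discarded and a final integration by parts performed, both of which implicitly use decay at infinity. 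You instead antisymmetrize the twice-differentiated compatibility relation in the indices $(j,l)$, so that the Hessians of $\widehat Q_i$ and $A_{ki}$ cancel by equality of mixed partials, leaving the symmetry $d^2\widehat E(U)\,dA^i(U) = \bigl(dA^i(U)\bigr)^{T}d^2\widehat E(U)$ — the classical Godunov/Friedrichs--Lax fact that the entropy Hessian symmetrizes the flux Jacobians, which is the same structure the paper exploits separately in the appendix via the symmetrizer $A_0$. With that, your termwise comparison of $\sum_i (V-U)^{T}d^2\widehat E\,dA^i\,\partial_{x_i}U$ against $\sum_i (V-U)^{T}(dA^i)^{T}d^2\widehat E\,\partial_{x_i}U$ is exact, and the two integrands in \eqref{magic} agree pointwise; the two arguments are equivalent in content (your symmetry identity is precisely what makes the paper's residual term vanish), but yours upgrades the lemma to a pointwise identity, eliminates $\widehat Q$ and all boundary/decay considerations except bare integrability, while the paper's divergence-form bookkeeping is what one would want if tracking localized versions of the relative entropy identity with explicit flux terms.
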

\begin{proof}[Proof of Lemma \ref{lem:dafermos}]
The proof of this equality can be found at several places in the literature. For the sake 
of completeness we recall its derivation here (we follow\cite{BV-2005} [p. 1812]{}).

Differentiating \eqref{def:Q} with respect to $U_l$, we obtain the identity
\begin{equation*}
	\sum_{k=1}^d d_ld_k \hat E(U) d_j A_{ki}(U)  =  d_ld_j \hat Q_i(U) - \sum_{k=1}^dd_ld_j A_{ki}(U)d_k \hat E(U).
\end{equation*}	
Using this identity, we calculate
\begin{equation*}
	\begin{split}
		&d^2\hat E(U)\left(\Div_x A(U)\right)(V-U) \\
		&\quad = \sum_{lij=1}^d \left(\sum_{k=1}^d d_ld_k \hat E(U) d_j A_{ki}\right)\frac{\partial U_j}{\partial x_i}(V_l - U_l) \\
		&\quad = \sum_{lij=1}^d d_l d_j \hat Q_i(U)\frac{\partial U_j}{\partial x_i}(V_l - U_l) 
		- \sum_{lijk=1}^dd_ld_j A_{ki}(U)d_k \hat E(U)\frac{\partial U_j}{\partial x_i}(V_l - U_l) \\
		&\quad = \sum_{l}\Div(d_l \hat Q(U)(V_l-U_l))  - \sum_{kl}\Div_x \left(d_lA_k(U)(V_l-U_l)\right)d_k\hat E(U) \\
		&\quad \qquad + \sum_l (\Grad_x V_l - \Grad_x U_l)\cdot \left[\sum_{k} d_lA_k(U)d_k \hat E(U) - d_l\hat Q(U)\right].
	\end{split}
\end{equation*}	
Now, we observe that \eqref{def:Q} implies that the last term  is zero. Thus, we can conclude
\begin{equation*}
	\begin{split}
		&d^2\hat E(U)\left(\Div_x A(U)\right)(V-U) \\
		&\qquad = \Div_x \left(d\hat Q(U)(V-U)\right) - \Div_x \left(dA(U)(V-U)\right)d\hat E(U)
	\end{split}
\end{equation*}
Integrating this identity over $\R^d$ yields \eqref{magic}.
\end{proof}

\begin{proof}[Proof of Proposition \ref{prop:realtive entropy}]
First of all, we recall that
\begin{align*}
	\opf{E}(V|U) & = E(V) - E(U) - dE(U)(V-U)\\
	 & = \wE(V) - \wE(U) - d\wE(U)(V-U).
\end{align*}
We deduce:
\begin{equation}\label{begin}
	\begin{split}
		\frac{d}{dt} \int_{\R^d} \opf{E}(V|U)~dx 
		& = \int_{\R^d} \partial_t \wE(V) - d\wE(U)U_t 
		- d^2\wE(U)U_t(V-U) \\
		&\qquad - d\wE(U)(V_t - U_t)~dx \\
		&=  \int_{\R^d}\partial_t \wE(V) - d^2\wE(U)U_t(V-U) - d \wE(U)V_t~dx \\
		&:= I_1+I_2 + I_3.
	\end{split}
\end{equation}
Since $d^2 E(U) = d^2\hat E(U)$,  formula \eqref{magic} provides
\begin{equation*}
	\begin{split}
		I_2 &= \int_{\R^d} d^2\wE(U)\left[\Div_x A(U) - F(U)\right](V-U)~dx \\
			& = \int_{\R^d} \Grad_x d\wE(U):dA(U)(V-U)~dx - \int_{\R^d} d^2\wE(U)F(U)(V-U)~dx.
	\end{split}
\end{equation*}
By adding and subtracting, and integrating by parts, we find 
\begin{equation*}
	\begin{split}
		I_3 &= -\int_{\R^d} d\wE(U)\left[V_t + \Div A(V) - F(V)\right]~dx \\
		&\qquad - \int_{\R^d} \left(\Grad_x d\wE(U)\right):A(V) + d\wE(U)F(V)~dx
	\end{split}
\end{equation*}
Consequently, 
\begin{equation}\label{i2pi3}
	\begin{split}
		I_2 + I_3 &= -\int_{\R^d} \Grad_x(d\wE(U)):\opf{A}(V|U) + d\wE(U)\left[V_t + \Div A(V) - F(V)\right]~dx \\
		& \qquad - \int_{\R^d}\Grad_x(d\wE(U)):{A}(U)~dx  \\
	&\qquad  - \int_{\R^d} d^2\wE(U)F(U)(V-U) + d\wE(U)F(V)  ~dx := J_1 + J_2 + J_3.
	\end{split}
\end{equation}
Now, using \eqref{eq:Qh}, we get:
\begin{equation}\label{i2iszero}
	\begin{split}
		J_2 &= - \int_{\R^d}\Grad_x(d\wE(U)):{A}(U)~dx \\
		& = \int_{\R^d} d\wE(U)\Div_x\opf{A}(U)~dx = \int_{\R^d} \Div_x \widehat Q(U)~dx = 0.
	\end{split}
\end{equation}

It only remains to compute the term $J_3$, which is the only non standard term, since it includes all the contributions of the forcing term $F(U)$. We now  insert our specific expression of $\wE$ and $U$.
A simple computation yields:
\begin{equation*}
	d\wE(U) = \begin{pmatrix}
		d_\vr \wE(U) \\
		d_P \wE(U)
	\end{pmatrix}
	= \begin{pmatrix}
		-\frac{P^2}{2\vr^2} + \log \vr + 1 \\
		\frac{P}{\vr}
	\end{pmatrix},
	\qquad d^2\wE(U) = \begin{pmatrix}
	* & -\frac{P}{\vr^2}	 \\
	-\frac{P}{\vr^2} & \frac{1}{\vr}	
	\end{pmatrix}.
\end{equation*}
Let us also introduce two functions $(q, Q)$ such that $V = [q, Q]^T$
and define $v = \frac{Q}{q}$. 
\begin{equation*}
	\begin{split}
		-J_3 &= \int_{\R^d} d^2\wE(U)F(U)(V-U) + d\wE(U)F(V)  ~dx \\
		& = \int_{\R^d} -\frac{P}{\vr^2}  \left( \vr \tilde P - \tilde\vr P -\vr \Grad_x\Phi \right)(q-\vr)
			+ \frac{1}{\vr}\left( \vr \tilde P - \tilde\vr P - \vr \Grad_x \Phi\right)(Q-P) \\
			&\qquad + \frac{P}{\vr}\left( q \tilde Q - \tilde q  Q  - q \Grad_x \Phi  \right)~dx \\
		& = \int_{\R^d} \frac{q}{\vr}  \left( \vr \tilde P - \tilde\vr  P\right) \left(\frac{Q}{q}-\frac{P}{\vr}\right) 
			+ \frac{P}{\vr}\left( q \tilde Q - \tilde q  Q    \right) - Q\Grad_x\Phi~dx \\
		& = \int_{\R^d} q \left(  \widetilde{\vr u} - \tilde\vr  u \right) (v-u) +u \left( q \tilde Q - \tilde q  Q    \right) - Q\Grad_x\Phi~dx 
        \end{split}
\end{equation*}
which yields
\begin{equation}\label{J3}
	\begin{split}		
	-J_3 & = \int_{\R^d}\int_{\R^d} K(x,y)q(x)\vr(y)[u(y) - u(x)][v(x) - u(x)] \\
		&\qquad \qquad + K(x,y)q(y)q(x)u(x)[v(y)- v(x)]~dydx. \\
		&= \int_{\R^d} \int_{\R^d} K(x,y)q(x)q(y)[u(y) - u(x)][v(x) - u(x)]~dxdy\\
		&\quad + \int_{\R^d} \int_{\R^d} K(x,y)q(x)q(y)[v(y) - v(x)][u(x) - v(x)]~dxdy \\
		&\quad + \int_{\R^d}\int_{\R^d} K(x,y)q(x)q(y)[v(y) - v(x)]v(x)~dxdy \\
		&\quad + \int_{\R^d} \int_{\R^d} K(x,y)q(x)(\vr(y)-q(y))[u(y) - u(x)][v(x) - u(x)]~dxdy \\
		&\quad - \int_{\R^d} qv\Grad_x \Phi~dx.
	\end{split}
\end{equation}
Using the symmetry of $K$, we see that the first two terms can be written
\begin{align}
		&\int_{\R^d} \int_{\R^d} K(x,y)q(x)q(y)[u(y) - u(x)][v(x) - u(x)]~dxdy \nonumber\\
		&\quad + \int_{\R^d} \int_{\R^d} K(x,y)q(x)q(y)[v(y) - v(x)][u(x) - v(x)]~dxdy \label{J31}\\
		&= \frac{1}{2}\int_{\R^d} \int_{\R^d} K(x,y)q(x)q(y)[u(y) - u(x)][(v(x) - u(x)) - (v(y) - u(y))]~dxdy \nonumber\\
		&\quad +  \frac{1}{2}\int_{\R^d} \int_{\R^d} K(x,y)q(x)q(y)[v(y) - v(x)][(u(x) - v(x)) - (u(y) - v(y))]~dxdy\nonumber\\
		& = \frac{1}{2}\int_{\R^d} \int_{\R^d} K(x,y)q(x)q(y)\left[(v(x) - u(x)) - (v(y) - u(y))\right]^2~dxdy.\nonumber
\end{align}
From the symmetry of $K$, we also easily deduce
\begin{equation}\label{J32}
	\begin{split}
		&\int_{\R^d} \int_{\R^d} K(x,y)q(x)q(y)[v(y) - v(x)]v(x)~dxdy\\
		&\qquad \qquad =- \frac{1}{2}\int_{\R^d} \int_{\R^d} K(x,y)q(x)q(y)[v(x) - v(y)]^2~dxdy.
	\end{split}
\end{equation}
Hence, by setting \eqref{J32} and \eqref{J31} in \eqref{J3}, we discover 
\begin{equation*}
	\begin{split}
		J_3 &=  \frac{1}{2}\int_{\R^d} \int_{\R^d} K(x,y)q(x)q(y)[v(x) - v(y)]^2~dxdy + \int_{\R^d} qv \Grad_x \Phi~dx\\
		&\quad - \frac{1}{2}\int_{\R^d} \int_{\R^d} K(x,y)q(x)q(y)\left[(v(x) - u(x)) - (v(y) - u(y))\right]^2~dxdy \\
		&\quad +\int_{\R^d} K(x,y)q(x)(\vr(y)-q(y))[u(y) - u(x)][v(x) - u(x)] ~dxdy
	\end{split}
\end{equation*}
We conclude the proof by combining the previous identities \eqref{i2iszero}, \eqref{i2pi3} and \eqref{begin}.

\end{proof}

In our proof Proposition \ref{prop:relative},
we will use the following immediate corollary of Proposition \ref{prop:realtive entropy}:
\begin{corollary}\label{cor:ree}
Let $f^\eps$ be a weak solution of \eqref{eq:eq2} satisfying \eqref{eq:entropyfinal2}
 and let 
$$ U^\eps= (\vr^\eps,\vr^\eps u^\eps), \quad \mbox{ with } \vr^\eps= \int_{\R^d} f^\eps\, dv,\quad \vr^\eps u^\eps=\int_{\R^d} v f^\eps\, dv.$$
Let $U=(\vr,\vr u)$ be the strong solution of \eqref{def:vec} satisfying \eqref{eq:entropyEuler}. 
Then the following inequality holds:
\begin{align}\label{eq:relative entropy eps}
		&\frac{d}{dt}\int_{\R^d} \opf{E}(U^\eps|U)~dx \nonumber\\
		&\qquad +\frac{1}{2}\int_{\R^d} \int_{\R^d} K(x,y)\vr^\eps(x)\vr^\eps(y)\left[(u^\eps(x) - u(x)) - (u^\eps(y) - u(y))\right]^2~dxdy \nonumber\\
		&\qquad \leq  \int_{\R^d} \left[ \pa_t \wE(U^\eps) + P^\eps \Grad_x \Phi 
			+ \frac{1}{2}\int_{\R^d}  K(x,y)\vr^\eps(x)\vr^\eps(y)[u^\eps(x) - u^\eps(y)]^2~dy\right] dx \nonumber\\
		& \qquad \qquad -\int_{\R^d} \Grad_x(d\wE(U)):\opf{A}(U^\eps|U)~dx \nonumber\\
		&\qquad \qquad 
		-\int_{\R^d} d\wE(U)\left[U^\eps_t + \Div A(U^\eps) - F(U^\eps)\right]~dx \\
		&\qquad \qquad +\int_{\R^d} \int_{\R^d} K(x,y)\vr^\eps(x)(\vr(y)-\vr^\eps(y))[u(y) - u(x)][u^\eps(x) - u(x)] ~dxdy.\nonumber
\end{align}
\end{corollary}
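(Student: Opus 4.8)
The corollary is nothing but Proposition~\ref{prop:realtive entropy} evaluated at the particular choice $V=U^\eps$, that is $q=\vr^\eps$, $Q=\vr^\eps u^\eps$ and $v=Q/q=u^\eps$. With this identification every term of the inequality in Proposition~\ref{prop:realtive entropy} matches, verbatim, the corresponding term of \eqref{eq:relative entropy eps}: the quadratic $K$-integral on the left, the pressure-type integral $\tfrac12\int_{\R^d} K(x,y)q(x)q(y)[v(x)-v(y)]^2$, the relative-flux term $\int_{\R^d}\Grad_x(d\wE(U)):\opf{A}(V|U)$, the forcing term $\int_{\R^d} d\wE(U)[V_t+\Div A(V)-F(V)]$, and the last bilinear correction (with $P^\eps\Grad_x\Phi$ playing the role of $Q\Grad_x\Phi$). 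Thus the whole content of the corollary is to justify that this inequality, established in Proposition~\ref{prop:realtive entropy} for an \emph{arbitrary smooth} $V$, survives when $V$ is replaced by the merely weak macroscopic fields $U^\eps=(\vr^\eps,\vr^\eps u^\eps)$ produced by a weak solution $f^\eps$ of \eqref{eq:eq2}.

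The structural fact that makes this passage possible is that, once the terms are organized as in the proof of Proposition~\ref{prop:realtive entropy}, \emph{every} derivative falling on $U^\eps$ is paired against a coefficient built solely from the strong solution $U$. The time derivatives enter only through $\frac{d}{dt}\int_{\R^d}\opf{E}(U^\eps|U)\,dx$ and $\int_{\R^d}\pa_t\wE(U^\eps)\,dx$, whose combination reduces to the pairing $-\int_{\R^d} d\wE(U)\,U^\eps_t\,dx$ plus terms carrying only $\pa_t U$; the spatial derivatives enter through $\int_{\R^d}\Grad_x(d\wE(U)):\opf{A}(U^\eps|U)\,dx$ and through $-\int_{\R^d} d\wE(U)[\Div A(U^\eps)-F(U^\eps)]\,dx$. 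Since $U=(\vr,\vr u)$ satisfies $\vr>0$, $u\in W^{1,\infty}$ and $\na_x\log\vr\in L^\infty$ by \eqref{eq:bdu}--\eqref{eq:bdlog}, the field $d\wE(U)$ is smooth with bounded gradient, so all these pairings are legitimate: those carrying a derivative of $U^\eps$ are read, after integration by parts, as the weak formulation \eqref{eq:weak} of the kinetic moment equations tested against $d\wE(U)$, while the derivative-free terms (the two quadratic $K$-integrals, the pressure integral, the relative flux $\opf{A}(U^\eps|U)$, and the bilinear correction) are finite thanks to the a priori bounds $\vr^\eps\in L\log L$ and $\vr^\eps|u^\eps|^2\in L^1$ furnished by \eqref{eq:entropyfinal2}, together with $K\in L^\infty$ and the boundedness of $U$.

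To make this rigorous I would mollify $U^\eps$ in $x$ (and, if necessary, in $t$), apply the already-established Proposition~\ref{prop:realtive entropy} to the smooth approximations, and let the regularization parameter tend to zero; the derivative-free terms converge by the $L^1$ and $L^2$ bounds above, while the derivative terms converge after the integration by parts just described. The main --- and only substantial --- difficulty is the forcing pairing $-\int_{\R^d} d\wE(U)[U^\eps_t+\Div A(U^\eps)-F(U^\eps)]\,dx$: because $f^\eps$ solves \eqref{eq:eq2} and not the Euler system \eqref{def:vec}, the fields $U^\eps$ do \emph{not} satisfy \eqref{def:vec}, and the genuine momentum flux $\int_{\R^d} v\otimes v\,f^\eps\,dv$ differs from $A(U^\eps)$ by the kinetic stress defect $\int_{\R^d}(v-u^\eps)\otimes(v-u^\eps)\,f^\eps\,dv-\vr^\eps\CI$. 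This residual is deliberately retained in \eqref{eq:relative entropy eps}; giving it meaning requires only the smoothness of $d\wE(U)$ and the integrability of the kinetic moments, whereas actually \emph{estimating} it --- through the dissipation $D_1(f^\eps)$ and the resulting $\sqrt{\eps}$ bound --- is postponed to the proof of Proposition~\ref{prop:relative}.
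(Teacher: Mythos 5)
Your proposal is correct and follows essentially the same route as the paper: the paper presents Corollary \ref{cor:ree} as an immediate consequence of Proposition \ref{prop:realtive entropy} obtained by the substitution $V=U^\eps$ (i.e.\ $q=\vr^\eps$, $Q=\vr^\eps u^\eps$, $v=u^\eps$), with no further argument given. Your additional mollification discussion justifying the application of the proposition to the merely weak fields $U^\eps$, and your observation that the kinetic stress defect is retained in the third term and only estimated later (in Lemma \ref{lem:kineticapproximation}), are faithful elaborations of what the paper leaves implicit.
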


In order to deduce Proposition \ref{prop:relative} from this corollary,
it remains to show that
\begin{enumerate}
\item The first term in the right hand side in  \eqref{eq:relative entropy eps} is of order $\eps$ when integrated with respect to $t$  (Lemma \ref{lem:negative}).
\item The second term is controlled by the relative entropy itself (in fact we will show that the relative flux is controlled by the relative entropy, see Lemma \ref{lem:flux}).
\item The third term is of order $\mathcal O(\eps)$ (Lemma \ref{lem:kineticapproximation}).
\item The last term can be controlled by the relative entropy (Lemma \ref{lem:theotherguy}).
\end{enumerate}
The rest of this paper is devoted to the proof of these $4$ points.

\subsection{ (1) The first term}

\begin{lemma}\label{lem:negative}
Let $f^\eps$ be the weak solution  of \eqref{eq:eq2} given by Theorem \ref{thm:kinetic}  and let 
$$ U^\eps= (\vr^\eps,\vr^\eps u^\eps), \quad \mbox{ with } \vr^\eps= \int_{\R^d} f^\eps\, dv,\quad \vr^\eps u^\eps=\int_{\R^d} v f^\eps\, dv.$$
Then
\begin{align*}
& \int_0^t  \int_{\R^d} \left[ \pa_t\wE(U^\eps) + P^\eps \Grad_x \Phi 
			+ \frac{1}{2}\int_{\R^d}  K(x,y)\vr^\eps(x)\vr^\eps(y)[u^\eps(x) - u^\eps(y)]^2~dy\right] dx \, ds\\
			&  \qquad\qquad  \qquad\qquad \leq C\eps \int _0^t \opf{F}(f^\eps(s))\, ds
\end{align*}
for all $t\geq 0$.
\end{lemma}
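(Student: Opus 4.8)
The plan is to observe that, after integrating in $x$, the first two terms in the bracket combine into a pure time derivative of the macroscopic entropy, so that the whole statement reduces to the kinetic entropy inequality \eqref{eq:entropyfinal2}, the minimization principle \eqref{eq:min}, and the fact that the initial datum is an exact Maxwellian. No estimate on $D_2$ is needed, because the alignment term sitting in the bracket is literally the one already controlled by \eqref{eq:entropyfinal2}.

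First I would remove the potential. Writing $\wE(U^\eps)=\frac{(P^\eps)^2}{2\vr^\eps}+\vr^\eps\log\vr^\eps = E(U^\eps)-\vr^\eps\Phi$ (with $P^\eps=\vr^\eps u^\eps$) and pairing the continuity equation $\partial_t\vr^\eps+\Div_x P^\eps=0$ (valid since $\vr^\eps,P^\eps$ are the zeroth and first moments of $f^\eps$, cf. \eqref{eq:cont0}) against $\Phi$, one gets
\begin{equation*}
\int_{\R^d}\partial_t\vr^\eps\,\Phi\,dx=-\int_{\R^d}\Div_x P^\eps\,\Phi\,dx=\int_{\R^d}P^\eps\cdot\Grad_x\Phi\,dx.
\end{equation*}
Hence $\int_{\R^d}\bigl[\partial_t\wE(U^\eps)+P^\eps\Grad_x\Phi\bigr]dx=\frac{d}{dt}\int_{\R^d}E(U^\eps)\,dx=\frac{d}{dt}\E(\vr^\eps,u^\eps)$, and integrating the full bracket in time makes it collapse to
\begin{equation*}
\E(\vr^\eps(t),u^\eps(t))-\E(\vr_0,u_0)+\frac{1}{2}\int_0^t\!\!\int_{\R^d}\!\int_{\R^d}K(x,y)\vr^\eps(x)\vr^\eps(y)|u^\eps(x)-u^\eps(y)|^2\,dy\,dx\,ds.
\end{equation*}

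I would then bound the surviving macroscopic expression by the kinetic estimates. The minimization principle \eqref{eq:min} gives $\E(\vr^\eps(t),u^\eps(t))\leq\F(f^\eps(t))$, while dropping the nonnegative term $\frac{1}{2\eps}\int_0^t D_1(f^\eps)$ in \eqref{eq:entropyfinal2} yields
\begin{equation*}
\F(f^\eps(t))+\frac{1}{2}\int_0^t\!\!\int_{\R^d}\!\int_{\R^d}K(x,y)\vr^\eps(x)\vr^\eps(y)|u^\eps(x)-u^\eps(y)|^2\,dy\,dx\,ds\leq\F(f_0)+C\eps\int_0^t\F(f^\eps(s))\,ds.
\end{equation*}
Chaining these two bounds, the collapsed expression above is at most $\F(f_0)-\E(\vr_0,u_0)+C\eps\int_0^t\F(f^\eps)\,ds$. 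The argument closes because $f_0=\frac{\vr_0}{(2\pi)^{d/2}}e^{-|v-u_0|^2/2}$ is exactly a Maxwellian, so the identity $\F\bigl(\frac{\vr}{(2\pi)^{d/2}}e^{-|v-u|^2/2}\bigr)=\E(\vr,u)$ forces $\F(f_0)=\E(\vr_0,u_0)$, and the two constant terms cancel, leaving precisely $C\eps\int_0^t\F(f^\eps)\,ds$.

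The delicate point is rigor, not algebra. Since $f^\eps$ is only a weak solution and $u^\eps$ need not belong to any $L^p$ space on vacuum regions, the steps ``$\int_{\R^d}\partial_t\wE(U^\eps)\,dx=\frac{d}{dt}\int_{\R^d}\wE(U^\eps)\,dx$'' and the integration by parts against $\Phi$ are, strictly speaking, only formal. I expect them to be legitimate because the weak solution is built as a limit of smooth approximations for which the continuity equation and the entropy relations hold as genuine equalities, so one should run the computation at the level of the approximating sequence, or else verify directly from the uniform bound \eqref{eq:entropyfinal} that $\vr^\eps$ and $P^\eps$ carry enough integrability for every integral to be finite and for the time integration by parts to be valid. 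This measure-theoretic justification is the only real obstacle; the remainder is the bookkeeping above.
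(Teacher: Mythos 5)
Your proposal is correct and follows essentially the same route as the paper's proof: both rest on the fundamental theorem of calculus in time, the minimization principle \eqref{eq:min}, the well-preparedness identity $\F(f_0)=\E(\vr_0,u_0)$ for the Maxwellian initial datum, the entropy inequality \eqref{eq:entropyfinal2} with $D_1\geq 0$ dropped, and the continuity equation to convert the $\int \vr^\eps\Phi$ difference into $\int_0^t\int P^\eps\cdot\Grad_x\Phi$. The only difference is cosmetic bookkeeping: you absorb the potential term into $\frac{d}{dt}\E(\vr^\eps,u^\eps)$ at the outset, whereas the paper works with the reduced entropies $\wE$ and $\widehat{\opf{F}}$ and splits $\int_0^t\int_{\R^d}\pa_t\wE(U^\eps)$ into three differences before reinstating the $\Phi$ terms.
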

\begin{proof}
First, we write
\begin{align*}
	& \int_0^t  \int_{\R^d}  \pa_t\wE(U^\eps)\, ds  \\
		&\qquad =\int_{\R^d} \widehat{E}(U^\eps)(t)-\widehat{E}(U_0)~dx \\
		&\qquad = 	\int_{\R^d}  \big[ \widehat{E}(U^\eps)(t)- \widehat{\opf{F}}(f^\eps)(t) \big]
		+ \big[\widehat{\opf{F}}(f^\eps)(t) - \widehat{\opf{F}}(f_0) \big]
		 + \big[\widehat{\opf{F}}(f_0) - \widehat{E}(U_0)\big] .
\end{align*}
The well-preparedness of the initial data gives
\begin{equation*}
	\int_{\R^d} \widehat{\opf{F}}(f_0) - \widehat{E}(U_0)~dx = 0,
\end{equation*}
and \eqref{eq:min} implies
\begin{equation*}
	\int_{\R^d} \widehat{E}(U^\eps)(t)- \widehat{\opf{F}}(f^\eps)(t)~dx \leq 0.
\end{equation*}
Finally, we deduce  (using \eqref{eq:entropyfinal2})
\begin{align*}
	 \int_0^t  \int_{\R^d}  \pa_t\wE(U^\eps)\, ds &  \leq \int_{\R^d} \widehat{\opf{F}}(f^\eps)(t) - \widehat{\opf{F}}(f_0)~dx\\
&  \leq \int_{\R^d} {\opf{F}}(f^\eps)(t) - {\opf{F}}(f_0)~dx- \int_{\R^d} \rho^\eps(t)\Phi - \rho_0 \Phi\, dx \\
	& \leq -\frac{1}{2}\int_0^t\int_{\R^d} \int_{\R^d} K(x,y)\vr^\eps(x)\vr^\eps(y)|u^\eps(y)- u^\eps(x)|^2\, dydxds\\ 
	& \qquad - \int_0^t\int_{\R^d} P^\eps \Grad \Phi\, dx\, ds + C\eps \int_0^t \opf{F}(f^\eps(s))\, ds
\end{align*}	
\end{proof}

\subsection{(2) Control of the relative flux}
We note that $|\na_x d\hat E(U)|$ is bounded in $L^\infty$  by $|| u||_{L^\infty(0,T^*;W^{1,\infty})}$, $||\na \log \rho||_{L^\infty}$, so the second term in \eqref{eq:relative entropy eps} will be controlled if we prove the following standard lemma:
\begin{lemma}\label{lem:flux}
The following inequality holds for all $U$, $V$:
$$ \int_{\R^d} |\opf{A}(V|U)|\, dx \leq   \int_{\R^d} \opf{E}(V | U)~dx$$
\end{lemma}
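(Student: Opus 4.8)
The plan is to prove the pointwise bound $|\opf{A}(V|U)| \leq \opf{E}(V|U)$ (or a constant multiple thereof that turns out to be one) for all admissible states, and then integrate over $\R^d$. Since both sides are evaluated at the same spatial point, everything reduces to an algebraic comparison between the relative flux and the relative entropy as functions of the state variables $(\vr,P)$ and $(q,Q)$. I would first recall the explicit forms already computed in the excerpt: the relative entropy satisfies
$$\opf{E}(V|U) = q\frac{|v-u|^2}{2} + p(q|\vr),$$
where $v = Q/q$, $u = P/\vr$, and $p(q|\vr) = \int_\vr^q \frac{q-z}{z}\,dz \geq 0$. The task is to show the relative flux $\opf{A}(V|U) = A(V) - A(U) - dA(U)(V-U)$ is dominated by this quantity.

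The key computational step is to write out $\opf{A}(V|U)$ entry by entry. Recalling
$$A(U) = \begin{pmatrix} P & 0 \\ \frac{P\otimes P}{\vr} & \vr \end{pmatrix},$$
the density-row is linear in $U$, so its relative flux vanishes identically; the only nontrivial contribution comes from the momentum flux $\frac{P\otimes P}{\vr}$, whose relative version I expect to collapse to
$$\frac{Q\otimes Q}{q} - \frac{P\otimes P}{\vr} - dA\!\left(\tfrac{P\otimes P}{\vr}\right)(V-U) = q\,(v-u)\otimes(v-u).$$
This is the standard identity that the Hessian of $P\otimes P/\vr$ produces exactly the quadratic-in-$(v-u)$ remainder, so the relative flux is a rank-one matrix with magnitude of order $q|v-u|^2$. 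I would verify this by a direct second-order Taylor expansion of the smooth (away from vacuum) map $(\vr,P)\mapsto P\otimes P/\vr$, noting it is convex in the relevant sense.

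Once this identity is in hand, the matrix norm satisfies $|\opf{A}(V|U)| = q|v-u|^2$ up to the choice of norm (for the Frobenius or spectral norm of the rank-one matrix $q(v-u)\otimes(v-u)$, the value is exactly $q|v-u|^2$). Comparing with $\opf{E}(V|U) = q\frac{|v-u|^2}{2} + p(q|\vr)$, and using $p(q|\vr)\geq 0$, one gets $|\opf{A}(V|U)| = q|v-u|^2 = 2\cdot q\frac{|v-u|^2}{2} \leq 2\,\opf{E}(V|U)$; absorbing the harmless factor of $2$ into the normalization (or observing that the author's stated inequality has no explicit constant, so the claim is really up to a fixed constant) yields the result, and integration over $\R^d$ finishes the proof.

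The main obstacle I anticipate is bookkeeping at vacuum and the precise matrix-norm constant: the map $P\otimes P/\vr$ is singular as $\vr\to 0$, so the Taylor expansion must be justified where $\vr,q>0$ and the bound extended by the convention already fixed in \eqref{eq:uu1}; separately, getting a clean constant of exactly $1$ (rather than $2$) depends on which norm $|\cdot|$ denotes and may require either a sharper estimate using $p(q|\vr)>0$ to soak up the excess, or simply reading the lemma as holding with an implicit universal constant. I would handle the vacuum issue by continuity/density and treat the constant by being explicit about the Frobenius norm of the rank-one remainder.
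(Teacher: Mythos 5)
Your proposal takes essentially the same route as the paper: the paper likewise computes $dA(U)(V-U)$ explicitly, observes that every entry of $A$ except the momentum flux $\frac{P\otimes P}{\vr}$ is linear (so those contributions to $\opf{A}(V|U)$ vanish), derives the same key identity $\opf{A}(V|U) = q(v-u)\otimes(v-u)$ in the momentum block, and concludes by comparing $q|v-u|^2$ with $\opf{E}(V|U)$. Your remark about the factor of $2$ is in fact sharper than the paper itself, whose final step $\int_{\R^d} q|v-u|^2\,dx \leq \int_{\R^d}\opf{E}(V|U)\,dx$ is literally off by that factor given $\opf{E}(V|U) = q\frac{|v-u|^2}{2} + p(q|\vr)$; as you note, this is harmless, since the lemma only feeds into a Gronwall estimate with an unspecified constant $C$.
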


\begin{proof}
A straightforward computation gives
\begin{equation*}
	\begin{split}
		&dA(U)(V-U) \\
		&\qquad =
		\begin{pmatrix}
			Q-P  \\
			-\frac{(q-\vr)}{\vr^2}P \otimes P  + \frac{1}{\vr}P\otimes (Q-P) + \frac{1}{\vr}(Q-P)\otimes P +  (q-\vr)		
		\end{pmatrix},
	\end{split}
\end{equation*}
and using the fact that $P=\vr u$ and $Q=qv$, we get:
\begin{equation*}
	\begin{split}
	&	\frac{1}{q}Q\otimes Q - \frac{1}{\vr}P\otimes P -\frac{1}{\vr}P\otimes (Q-P) - \frac{1}{\vr}(Q-P)\otimes P + \frac{(q-\vr)}{\vr^2}P \otimes P\\
	&\qquad  = qv \otimes v + \vr u \otimes u - u \otimes qv - qv \otimes u + (q-\vr)u \otimes u\\
	&\qquad  = q (v-u)\otimes (v-u). 
	\end{split}
\end{equation*}
Since all the other terms in $\opf{A}(\cdot | \cdot)$ are linear, we deduce
\begin{equation*}
	\opf{A}(V|U)
	= \begin{pmatrix}
		0 & 0 \\
		q(v-u)\otimes (v-u) & 0
	\end{pmatrix}.
\end{equation*}
This implies
\begin{equation*}
	\int_{\R^d} \left|\opf{A}(V|U) \right|~dx = \int_{\R^d} q|v-u|^2~dx \leq \int_{\R^d} \opf{E}(V|U)~dx,
\end{equation*}
which concludes our proof.

\end{proof}

\subsection{(3) Kinetic approximation}
	

\begin{lemma}\label{lem:kineticapproximation}
Let $U$ be a smooth function, let $f^\eps$ be a weak solution of \eqref{eq:eq2} satisfying \eqref{eq:entropyfinal2}
 and define
$$ U^\eps= (\vr^\eps,\vr^\eps u^\eps), \quad \mbox{ with } \vr^\eps= \int_{\R^d} f^\eps\, dv,\quad \vr^\eps u^\eps=\int_{\R^d} v f^\eps\, dv.$$
There exists a constant $C$ depending on $T$,
$|| u||_{L^\infty(0,T^*;W^{1,\infty})}$ and $||\na \log \rho||_{L^\infty}$
such that
$$
\left| \int_0^t \int_{\R^d} dE(U)\left[U^\eps_t + \Div A(U^\eps) - F(U^\eps)\right]~dx \right| \leq
C \left(\int_0^t D_1(f^\eps)\, ds \right)^{1/2}
\leq  C\sqrt{\epsilon}.
$$
\end{lemma}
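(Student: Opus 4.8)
The lemma claims that the "kinetic approximation error" — the quantity $\int_0^t \int_{\R^d} dE(U)[U^\eps_t + \Div A(U^\eps) - F(U^\eps)]\,dx$ — is bounded by $C(\int_0^t D_1(f^\eps)\,ds)^{1/2}$, which is $O(\sqrt\eps)$ by the entropy estimate.

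The key point: $U^\eps = (\vr^\eps, \vr^\eps u^\eps)$ are the macroscopic moments of $f^\eps$, but $U^\eps$ does NOT satisfy the Euler system exactly. The continuity equation holds exactly (integrate (4.2)/(eq:eq2) in $v$). But the momentum equation has a defect: the true second moment $\int v\otimes v f^\eps\,dv$ differs from the "Maxwellian" flux $\vr^\eps u^\eps \otimes u^\eps + \vr^\eps \mathbb{I}$ that appears in $A(U^\eps)$. This defect is exactly what $D_1(f^\eps)$ controls, since $D_1$ measures how far $f^\eps$ is from being a Maxwellian centered at $u^\eps$.

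**Plan of Proof**

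I would proceed as follows.

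\textit{Step 1.} Compute $U^\eps_t + \Div A(U^\eps) - F(U^\eps)$ explicitly using the moment equations. The first component vanishes: integrating \eqref{eq:eq2} in $v$ gives exactly $\vr^\eps_t + \Div(\vr^\eps u^\eps)=0$, so the continuity defect is zero. For the momentum component, multiply \eqref{eq:eq2} by $v$ and integrate; comparing with $\Div A(U^\eps)=\Div(\vr^\eps u^\eps\otimes u^\eps + \vr^\eps\mathbb{I})$, the only surviving term is the divergence of the \emph{Reynolds stress}
$$
R^\eps := \int_{\R^d}(v-u^\eps)\otimes(v-u^\eps)\,f^\eps\,dv - \vr^\eps\,\mathbb{I}.
$$
Thus $U^\eps_t + \Div A(U^\eps) - F(U^\eps) = (0,\ \Div_x R^\eps)^T$, where the alignment and potential terms from the momentum moment match $F(U^\eps)$ exactly (since $K$ is symmetric and the CS operator contributes $\vr^\eps\tilde P^\eps - \tilde\vr^\eps P^\eps$).

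\textit{Step 2.} Since only the momentum component of the defect is nonzero, only $d_P\wE(U)=P/\vr=u$ survives in the pairing. After integrating by parts in $x$ (moving $\Div_x$ off $R^\eps$), the quantity becomes
$$
\int_0^t\int_{\R^d}\Grad_x u : R^\eps\,dx\,ds.
$$
This is where $\|u\|_{L^\infty(0,T^*;W^{1,\infty})}$ enters as the $L^\infty$ bound on $\Grad_x u$.

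\textit{Step 3.} The heart of the matter is to bound $R^\eps$ in $L^1_x$ by $D_1(f^\eps)^{1/2}$. Recall
$$
D_1(f^\eps)=\int_{\R^{2d}}\frac{1}{f^\eps}\left|\Grad_v f^\eps - f^\eps(u^\eps - v)\right|^2\,dv\,dx.
$$
The integrand is $f^\eps\,|\Grad_v\log f^\eps + (v-u^\eps)|^2$, which vanishes precisely when $f^\eps$ is the Maxwellian $\propto e^{-|v-u^\eps|^2/2}$, for which $R^\eps=0$. The strategy is to write $R^\eps$ against a test matrix and integrate by parts in $v$: for the diagonal correction, use the identity $\int(v-u^\eps)\otimes(v-u^\eps)f^\eps\,dv = \int (v-u^\eps)\otimes[f^\eps(v-u^\eps)-\Grad_v f^\eps]\,dv + \int(v-u^\eps)\otimes\Grad_v f^\eps\,dv$, and the last term integrates by parts to $-\vr^\eps\mathbb{I}$. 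Hence
$$
R^\eps = \int_{\R^d}(v-u^\eps)\otimes\big[f^\eps(v-u^\eps)-\Grad_v f^\eps\big]\,dv.
$$
Now apply Cauchy--Schwarz, factoring $f^\eps(v-u^\eps)-\Grad_v f^\eps = \sqrt{f^\eps}\cdot\frac{1}{\sqrt{f^\eps}}[f^\eps(v-u^\eps)-\Grad_v f^\eps]$, to get
$$
|R^\eps(x)|\le\Big(\!\int |v-u^\eps|^2 f^\eps\,dv\Big)^{1/2}\Big(\!\int\frac{1}{f^\eps}|\Grad_v f^\eps - f^\eps(u^\eps-v)|^2\,dv\Big)^{1/2}.
$$
Integrating in $x$ and applying Cauchy--Schwarz once more, the first factor is controlled by the kinetic energy (hence by $\mathcal F(f^\eps)$, uniformly bounded via \eqref{eq:entropyfinal}), and the second yields $D_1(f^\eps)^{1/2}$. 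Finally integrate in $t$ and use Cauchy--Schwarz in $s$ together with $\int_0^t D_1(f^\eps)\,ds\le C\eps$ from \eqref{eq:entropyfinal2}.

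\textit{Main obstacle.} The delicate point is Step 3: establishing the factorization so that the Reynolds stress is genuinely quadratic in the "off-Maxwellian" part measured by $D_1$. One must be careful that $u^\eps$ is defined via \eqref{eq:uu1} and may vanish on vacuum, and that all integration-by-parts in $v$ is justified for weak solutions (this uses the $L^\infty\cap L^1$ bounds and finite second moment from Theorem \ref{thm:kinetic}). The uniform-in-$\eps$ control of $\int_0^t\mathcal F(f^\eps)\,ds$ needed to bound the kinetic-energy factor follows from \eqref{eq:entropyfinal} after absorbing the $Ct$ term, so no circularity arises.
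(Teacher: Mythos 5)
Your overall strategy is exactly the one the paper uses (following Mellet--Vasseur \cite{MV-2008}): extract the moment equations from the weak formulation, note that the continuity defect vanishes identically and that the momentum defect is the divergence of a Reynolds stress, integrate by parts in $x$ so that only $\Grad_x dE(U)$ appears (bounded via $\|u\|_{L^\infty(0,T^*;W^{1,\infty})}$ and $\|\na\log\rho\|_{L^\infty}$), and then factor the stress against the quantity measured by $D_1$ via Cauchy--Schwarz, closing with the entropy bounds \eqref{eq:entropyfinal}--\eqref{eq:entropyfinal2}. The paper works with $\int (u^\eps\otimes u^\eps - v\otimes v+\mathbb{I})f^\eps\,dv$ rather than your centered $R^\eps$, but these are the same object up to sign.

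However, Step 3 as written contains a sign error that makes your key displayed identity false. Since
\[
\int_{\R^d}(v-u^\eps)\otimes\Grad_v f^\eps\,dv=-\vr^\eps\,\mathbb{I},
\]
your decomposition actually gives
\[
\int_{\R^d}(v-u^\eps)\otimes\bigl[f^\eps(v-u^\eps)-\Grad_v f^\eps\bigr]\,dv
=\int_{\R^d}(v-u^\eps)\otimes(v-u^\eps)f^\eps\,dv+\vr^\eps\,\mathbb{I}
=R^\eps+2\vr^\eps\,\mathbb{I},
\]
not $R^\eps$: the two $\vr^\eps\mathbb{I}$ contributions add instead of cancelling, leaving an $O(1)$ discrepancy that cannot be $O(\sqrt\eps)$. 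Consistently, your bracket $f^\eps(v-u^\eps)-\Grad_v f^\eps$ does \emph{not} vanish on the Maxwellian $\vr^\eps e^{-|v-u^\eps|^2/2}$ (there it equals $2f^\eps(v-u^\eps)$), so it is not the off-Maxwellian quantity controlled by $D_1$. The fix is a single sign: the correct identity is
\[
R^\eps=\int_{\R^d}(v-u^\eps)\otimes\bigl[\Grad_v f^\eps - f^\eps(u^\eps-v)\bigr]\,dv,
\]
whose bracket is exactly the $D_1$ integrand; the integration by parts then produces the $-\vr^\eps\mathbb{I}$ with the right sign and the identity is exact. With this correction, your Cauchy--Schwarz chain (which already references the correct quantity $\Grad_v f^\eps - f^\eps(u^\eps-v)$, suggesting a transcription slip) goes through verbatim and coincides with the paper's computation, including the bound $\vr^\eps|u^\eps|^2\le\int |v|^2 f^\eps\,dv$ used to control the kinetic-energy factor and the final application of $\int_0^t D_1(f^\eps)\,ds\le C\eps$ from \eqref{eq:entropyfinal2}. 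Your cautionary remarks about vacuum and about justifying the $v$-integrations by parts for weak solutions are apposite; the paper handles both at the level of the weak formulation with test functions $\psi=v\Psi(t,x)$.
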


\begin{proof}
	By setting $\phi := \phi(t,x)$ in \eqref{eq:weak}, we see that
	\begin{equation}
		\begin{split}
			\vr_t^\eps + \Div(\vr^\eps u^\eps) = 0 \quad \text{ in }D'([0,T)\times \Om).
		\end{split}
	\end{equation}
	Setting $\phi := v \Psi(t,x)$, where $\Psi$ is a smooth vector field, we find that
	\begin{equation}
		\begin{split}
			&(\vr^\eps u^\eps)_t + \Div_x (\vr^\eps u^\eps \otimes u^\eps) + \Grad_x \vr^\eps \\
			&\qquad - \int_{\R^d} K(x,y)\vr^\eps(t,x)\vr^\eps(t,y)(u^\eps(x)- u^\eps(y))~dy + \vr^\eps \Grad_x \Phi\\
			&\qquad =  \Div_x\int_{\R^d} \left(u^\eps \otimes u^\eps - v \otimes v + \mathbb{I}\right) f^\eps ~dv,
		\end{split}
	\end{equation}
	in the sense of distributions on $[0,T) \times \Om$. Hence, we have that
	\begin{equation}\label{ax:1}
		\begin{split}
			&\left| \int_0^t \int_{\R^d} dE(U)\left[U^\eps_t + \Div A(U^\eps) - F(U^\eps)\right]~dxdt \right| 	\\
			&\qquad \leq\left|\int_0^t \int_{\R^d} |\na _x dE(U)|
			\left|\int_{\R^d} \left(u^\eps \otimes u^\eps - v \otimes v + \mathbb{I}\right) f^\eps ~dv\right|dxdt\right|	\\
			&\qquad \leq C\int_0^t\int_{\R^d}\left|\int_{\R^d} \left(u^\eps \otimes u^\eps - v \otimes v + \mathbb{I}\right) f^\eps ~dv\right|dxdt,
		\end{split}
	\end{equation}
	where the constant $C$ depends on $|| u||_{L^\infty(0,T^*;W^{1,\infty})}$ and $||\na \log \rho||_{L^\infty}$. To conclude, we have 
	to prove that the righthand side can be controlled by the dissipation. 
	As in \cite{MV-2008}, we calculate
	\begin{equation*}\label{bydiss}
		\begin{split}
			&\int_{\R^d}(u^\eps \otimes u^\eps - v \otimes v + \mathbb{I})f^\eps~dv \\
			& = \int_{\R^d} \left(u^\eps \otimes (u^\eps-v) + (u^\eps-v) \otimes v + \mathbb{I}\right)f^\eps~dv \\
			&= \int_{\R^d} u^\eps \sqrt{f^\eps} \otimes \left((u^\eps-v)\sqrt{f^\eps} - 2\Grad_v \sqrt{f^\eps} \right) 
			+ u^\eps \otimes \Grad_v f^\eps \\
			&\qquad + \left((u^\eps-v)\sqrt{f^\eps} - 2\Grad_v \sqrt{f^\eps} \right)\otimes v\sqrt{f^\eps} +\Grad_v f^\eps\otimes v + \mathbb{I}f^\eps~dv.
		\end{split}
	\end{equation*}
	Using integration by parts, we see that
	\begin{equation*}
		\int_{\R^d}u^\eps \otimes \Grad_v f^\eps~dv = 0, \qquad \int_{\R^d}\Grad_v f^\eps\otimes v~dv = \int_{\R^d} -f \mathbb{I}~dv.
	\end{equation*}
	By applying this and the H\"older inequality to \eqref{bydiss} we find
	\begin{align}\label{ax:2}
			&\int_0^t \int_{\R^d}\left|\int_{\R^d}(u^\eps \otimes u^\eps - v \otimes v + \mathbb{I})f^\eps~dv\right|~dx\, ds \\
			 &\qquad \leq\int_0^t \left(\int_{\R^d} \int_{\R^d}f^\eps|v|^2 + f^\eps|u^\eps|^2~dvdx \right)^\frac{1}{2}D_1(f^\eps)^\frac{1}{2}\, ds
			\leq C\left( \int_0^t D_1(f^\eps)\right)^\frac{1}{2}, \nonumber
	\end{align}
	where the last inequality follows from the entropy bound (Proposition \ref{prop:main}) and 
	\begin{equation*}\label{eq:ent}
		\begin{split}
			\vr^\eps |u^\eps|^2 = \int f^\eps vu^\eps~ dv 
			&\leq \left(\int f^\eps v^2~dv\right)^\frac{1}{2}\left(\int f^\eps |u^\eps|^2~dv\right)^\frac{1}{2} \\
			&= \left(\int f^\eps v^2~dv\right)^\frac{1}{2}\left(\vr^\eps |u^\eps|^2\right)^\frac{1}{2}.
		\end{split}
	\end{equation*}
	We conclude by combining \eqref{ax:1} and \eqref{ax:2}.
\end{proof}

\subsection{(4) The last term}
Finally, we have:
\begin{lemma}\label{lem:theotherguy}
Assuming that $U=(\vr,\vr u)$  and $V=(q,qv)$ are such that $u\in L^\infty(\R^d)$, $q$, $\vr \in L^1(\R^d)$, there exists a constant $C$ such that
\begin{equation}
	\begin{split}
		&\int_{\R^d}\int_{\R^d} K(x,y)q(x)(\vr(y)-q(y))[u(y) - u(x)][v(x) - u(x)] ~dxdy\\
		&\qquad\qquad \leq C \|u\|_{L^\infty} (\| \vr\|_{L^1}+\|q\|_{L^1}) \int_{\R^d} \opf{E}(V | U)~dx
	\end{split}
\end{equation}
\end{lemma}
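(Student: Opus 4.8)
The plan is to exploit the uniform bounds on the kernel and the velocity field in order to decouple the double integral, and then to recognize the two resulting single integrals as quantities controlled by the two separate components of the relative entropy identified in \eqref{eq:relativecontrol}. Writing $\|K\|_{L^\infty}$ for the uniform bound on the symmetric kernel, the crude pointwise estimates $|K(x,y)|\le \|K\|_{L^\infty}$ and $|u(y)-u(x)|\le 2\|u\|_{L^\infty}$ give
\begin{equation*}
  \left| \int_{\R^d}\!\int_{\R^d} K(x,y)q(x)(\vr(y)-q(y))[u(y) - u(x)][v(x) - u(x)]\,dxdy\right|
  \le 2\|K\|_{L^\infty}\|u\|_{L^\infty}\, A\, B,
\end{equation*}
where $A=\int_{\R^d} q(x)|v(x)-u(x)|\,dx$ and $B=\int_{\R^d}|\vr(y)-q(y)|\,dy$. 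The key structural observation is that, after these bounds, the integrand factorizes into a function of $x$ times a function of $y$, so the double integral splits as the product $A\cdot B$.

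The second step is to control $A$ and $B$ separately, each by (the square root of) the relative entropy. For $A$, a single Cauchy--Schwarz inequality gives $A\le \|q\|_{L^1}^{1/2}\big(\int_{\R^d} q|v-u|^2\,dx\big)^{1/2}$, and since $\opf{E}(V|U)=q\frac{|v-u|^2}{2}+p(q|\vr)\ge q\frac{|v-u|^2}{2}$ by \eqref{eq:relativecontrol}, this yields $A\le \|q\|_{L^1}^{1/2}\big(2\int_{\R^d}\opf{E}(V|U)\,dx\big)^{1/2}$. For $B$, I would reuse the argument already carried out in the proof of Theorem \ref{thm:main}: inserting the factor $\min\{1/q,1/\vr\}^{1/2}\max\{q,\vr\}^{1/2}$ and applying Cauchy--Schwarz gives
\begin{equation*}
  B\le \Big(\int_{\R^d}\min\{\tfrac1q,\tfrac1\vr\}|\vr-q|^2\,dx\Big)^{1/2}\Big(\int_{\R^d}\max\{q,\vr\}\,dx\Big)^{1/2}.
\end{equation*}
Here the second factor is bounded by $(\|q\|_{L^1}+\|\vr\|_{L^1})^{1/2}$, while the first factor is controlled by the relative pressure through \eqref{eq:relativepressure}, so that $B\le \big(2\int_{\R^d}\opf{E}(V|U)\,dx\big)^{1/2}(\|q\|_{L^1}+\|\vr\|_{L^1})^{1/2}$.

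Combining the three steps, the product $A\cdot B$ produces exactly one power of $\int_{\R^d}\opf{E}(V|U)\,dx$ together with the prefactor $\|q\|_{L^1}^{1/2}(\|q\|_{L^1}+\|\vr\|_{L^1})^{1/2}\le \|q\|_{L^1}+\|\vr\|_{L^1}$, which delivers the claimed inequality with a constant $C$ proportional to $\|K\|_{L^\infty}$. I do not anticipate a genuine obstacle: the only mildly delicate point is the $L^1$ control of $|\vr-q|$ by the relative entropy, but this is precisely the $\min/\max$ splitting of \eqref{eq:relativepressure} already exploited earlier in the paper, so it transfers essentially verbatim. The feature that makes the whole estimate close is that the relative entropy simultaneously dominates a $q$-weighted $L^2$ velocity deviation and a $\min\{1/q,1/\vr\}$-weighted $L^2$ density deviation, and the last term of the relative entropy inequality is bilinear in exactly these two quantities.
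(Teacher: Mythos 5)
Your proposal is correct and follows essentially the same route as the paper: the identical $\min\{1/q,1/\vr\}^{1/2}\max\{q,\vr\}^{1/2}$ insertion, Cauchy--Schwarz, the relative-pressure bound \eqref{eq:relativepressure}, and the fact that $\opf{E}(V|U)$ dominates both $q|v-u|^2$ and $p(q|\vr)$. The only (harmless) difference is that you pull out $\|K\|_{L^\infty}$ first so the double integral factors as $A\cdot B$ and you apply Cauchy--Schwarz to each single integral, whereas the paper applies Cauchy--Schwarz once on the product measure $K(x,y)q(x)\,dx\,dy$ and bounds $K$ afterwards; the resulting constants agree.
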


\begin{proof}
We have 
\begin{equation*}
	\begin{split}
		&\int_{\R^d}\int_{\R^d} K(x,y)q(x)(\vr(y)-	q(y))[u(y) - u(x)][v(x) - u (x)] ~dxdy\\
		&\qquad\qquad \leq 	
		2\|u\|_{L^\infty} \int_{\R^d} \int_{\R^d} K(x,y)q(x)\min\left\{\frac{1}{q(y)}, \frac{1}{\vr(y)}\right\}^{1/2} |\vr(y)-q(y)| \\
		 & \qquad\qquad \qquad\qquad\qquad\qquad\qquad
		 \max\left\{ q(y),\vr(y)\right\}^{1/2} |v(x) - u(x)| ~dxdy \\
		& \qquad\qquad \leq 
		2\|u\|_{L^\infty} 
	 	\left( \int_{\R^d}\int_{\R^d} K(x,y)q(x)\min\left\{\frac{1}{q(y)}, \frac{1}{\vr(y)}\right\} (\vr(y)-q(y))^2 ~dxdy \right)^{1/2} \\
		 & \qquad\qquad \qquad\qquad \left( \int_{\R^d} \int_{\R^d} K(x,y)q(x) \max\left\{q(y),\vr(y)\right\} |v(x) - u(x)| ^2 ~dxdy \right)^{1/2}		 	
		\end{split}
\end{equation*}
And so using (\ref{eq:relativepressure}) and the fact that $K(x,y)\leq C$, we deduce:
\begin{equation*}
	\begin{split}
		&\int_{\R^d} \int_{\R^d} K(x,y)q(x)(\vr(y)-q(y))[u(y) - u(x)][v(x) - u (x)] ~dxdy\\
		& \qquad\qquad \leq 
		C\|u\|_{L^\infty} \|q\|_{L^1}^{1/2} (\|q\|_{L^1}+\|\vr\|_{L^1})^{1/2}
	 	\left( \int_{\R^d} p(q|\vr)(y) ~dy \right)^{1/2} \\
		 & \qquad\qquad \qquad\qquad \qquad\qquad\times \left( \int_{\R^d} q(x) [v(x) - u(x)] ^2 ~dx \right)^{1/2}\\
    		& \qquad\qquad \leq 
		C\|u\|_{L^\infty} \|q\|_{L^1}^{1/2} (\|q\|_{L^1}+\|\vr\|_{L^1})^{1/2}\int_{\R^d} \opf{E}(V | U)~dx.
\end{split}
\end{equation*}
	 	
\end{proof}

\subsection{Proof of Proposition \ref{prop:relative}}\label{subsec:prop}
We recall that
$$ \opf{E}(U^\eps(0)|U(0))=0.$$
For any $t \in (0,t)$, we integrate \eqref{eq:relative entropy eps} over $(0,t)$ and apply Lemmas \ref{lem:flux}, \ref{lem:kineticapproximation} and \ref{lem:theotherguy}, and , 
 to obtain the inequality
\begin{align}\label{eq:prop1}
		&\int_{\R^d} \opf{E}(U^\eps|U)(t)~dx \nonumber\\
		&\qquad +\frac{1}{2}\int_0^{t}\int_{\R^d} \int_{\R^d} K(x,y)\vr^\eps(x)\vr^\eps(y)\left[(u^\eps(x) - u(x)) - (u^\eps(y) - u(y))\right]^2\, dx\, dy\, ds \nonumber\\
		&\qquad \qquad \leq  \int_0^t\int_{\R^d} \pa_t  \widehat{E}(U^\eps) + P^\eps \Grad_x \Phi 
			+ \frac{1}{2}\int_{\R^d}  K(x,y)\vr^\eps(x)\vr^\eps(y)[u^\eps(x) - u^\eps(y)]^2\, dy\, dx\, ds  \nonumber\\
		&\qquad \qquad\quad  + C\sqrt{\eps} +C
		\int_0^t \int_{\R^d} \opf{E}(U^\eps | U)\, dx\, dt. \nonumber
\end{align}
Lemma \ref{lem:negative} now implies:
\begin{align}
		&\int_{\R^d} \opf{E}(U^\eps|U)(t)~dx \nonumber\\
		&\qquad +\frac{1}{2}\int_0^{t}\int_{\R^d} \int_{\R^d} K(x,y)\vr^\eps(x)\vr^\eps(y)\left[(u^\eps(x) - u(x)) - (u^\eps(y) - u(y))\right]^2~dxdydt \nonumber\\
		&\qquad \leq C\eps \int_0^t \opf{F}(f^\eps(s))\, ds +   C\sqrt{\eps}+ C\int_0^t\int_{\R^d} \opf{E}(U^\eps|U)~dxdt.
\end{align}
and \eqref{eq:entropyfinal2} implies
$$ \int_0^t \opf{F}(f^\eps(s))\, ds  \leq  e^{C\eps t}.$$
We deduce
\begin{align}
		&\int_{\R^d} \opf{E}(U^\eps|U)(t)~dx \nonumber\\
		&\qquad +\frac{1}{2}\int_0^{t}\int_{\R^d} \int_{\R^d} K(x,y)\vr^\eps(x)\vr^\eps(y)\left[(u^\eps(x) - u(x)) - (u^\eps(y) - u(y))\right]^2~dxdydt \nonumber\\
		&\qquad \leq   C(T)\sqrt{\eps}+ C\int_0^t\int_{\R^d} \opf{E}(U^\eps|U)~dxdt.
\end{align}

which completes the proof of Proposition \ref{prop:relative}.

\newpage 

\appendix

\section{Local well-posedness of the Euler-flocking system}\label{S5}
The purpose of this appendix is to prove 
the existence of a local-in-time unique smooth solution of
the Euler-flocking equations. In particular, 
the objective is to prove Theorem \ref{thm:Euler}
which we relied upon to prove our main result (Theorem \ref{thm:main}).

First, we observe that the system \eqref{eq:cont}-\eqref{eq:moment} is a $4\times 4$ system of conservation laws
which can be written in the following equivalent form when the solution is smooth:
\begin{eqnarray}
\begin{cases}
\partial_t \vr+ u \Grad_x \vr + \vr \Div_x u=0\\
\vr ( \partial_t u + u \Grad_x u) + \Grad_x \vr = F(\vr,u, \Grad_x \Phi).
\end{cases}\label{Euler-smooth}
\end{eqnarray}
Here, 
\begin{equation}\label{potential}
F(\vr, u, \Grad \Phi(x))= \int_{\R^d} K(x,y)\vr(x)\vr(y)[u(y)-u(x)]~dy - \vr \Grad_x \Phi.
\end{equation}

In what follows, we shall need the Sobolev space $H^s(\R^d)$ given by the norm
$$\|g\|_s^2 = \sum_{|\alpha| \le s} \int_{\R^d}|D^{\alpha} g|^2 dx.$$

For $g \in L^{\infty}([0,T]; H^s),$ define
$$\n g \n_{s,T} = \sup_{0 \le t \le T} \|g(\cdot, t)\|_s.$$

Now, consider the Cauchy problem of \eqref{Euler-smooth} with smooth initial data:
\begin{equation}\label{ID}
(\vr, u)|_{t=0} = (\vr_0, u_0)(x).
\end{equation}
The values of the  vector  $$
w = \begin{pmatrix}
\vr\\
u  
 \end{pmatrix}
 $$
 lie in the state space ${\mathcal G},$ which is an open set in $\R^4.$  
The state space ${\mathcal G}$ is introduced because certain physical quantities such as density should be positive. 
Indeed, by invoking the method of characteristics for the continuity equation the following lemma holds:
\begin{lemma}
If $(\vr, u) \in C^1(\R^3 \times [0,T])$ is a uniformly bounded solution of \eqref{eq:cont} with $\vr(x,0)  >0,$ then $\vr(x,t) >0$ on $\R^3 \times [0,t].$
\end{lemma}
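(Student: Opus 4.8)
The plan is to use the method of characteristics, exactly as the sentence preceding the lemma suggests. First I would rewrite the continuity equation \eqref{eq:cont} in its non-conservative (advective) form $\partial_t \vr + u\cdot\Grad_x \vr = -\vr\, \Div_x u$, valid for the $C^1$ solution, as already recorded in the first line of \eqref{Euler-smooth}.

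Next, for a fixed base point $x_0 \in \R^3$ I would introduce the characteristic curve $t\mapsto X(t;x_0)$ solving the ODE $\dot X = u(X,t)$ with $X(0)=x_0$. Since $u\in C^1(\R^3\times[0,T])$ is uniformly bounded with bounded first derivatives (which is the case for the $H^s$ solutions of interest, $s>5/2$, by Sobolev embedding), it is Lipschitz in $x$ with bounded speed, so the Cauchy--Lipschitz theorem yields a unique trajectory that, having bounded velocity, cannot escape to infinity in finite time and hence exists on all of $[0,T]$. The same argument run backward in time shows that through every space-time point $(x,t)$ there passes exactly one characteristic, i.e. the flow map $x_0 \mapsto X(t;x_0)$ is a bijection of $\R^3$ onto itself for each $t\in[0,T]$.

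Then I would restrict $\vr$ to a characteristic and set $g(t) = \vr(X(t;x_0),t)$. Differentiating and using the advective equation gives the linear scalar ODE $g'(t) = -g(t)\,(\Div_x u)(X(t;x_0),t)$, whose solution is $g(t)=g(0)\exp\bigl(-\int_0^t (\Div_x u)(X(s;x_0),s)\,ds\bigr)$. Because $\Div_x u$ is bounded on the compact interval $[0,T]$, the integral in the exponent is finite, so the exponential factor is strictly positive. Consequently $g(t)$ has the same sign as $g(0)=\vr(x_0,0)>0$, so $\vr>0$ all along every characteristic.

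Finally, since every $(x,t)$ lies on a unique characteristic emanating from some $x_0$ at time $0$ (by the bijectivity established above), positivity propagates to the whole slab, giving $\vr(x,t)>0$ on $\R^3\times[0,T]$. The only point requiring genuine care is the global-in-time solvability and surjectivity of the characteristic flow: this is where the hypotheses that the solution is $C^1$ and uniformly bounded enter, guaranteeing simultaneously the Lipschitz regularity of $u$ needed for uniqueness of trajectories and the absence of finite-time escape needed to trace every point back to $t=0$.
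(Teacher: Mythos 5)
Your proof is correct and follows exactly the route the paper intends: the paper offers no written proof beyond the remark that the lemma follows ``by invoking the method of characteristics for the continuity equation,'' and your argument is precisely the careful execution of that sketch (advective form, global well-posedness and bijectivity of the characteristic flow from boundedness and $C^1$ regularity of $u$, and the explicit exponential formula $\vr(X(t),t)=\vr(x_0,0)\exp\bigl(-\int_0^t \Div_x u\,ds\bigr)>0$). One minor remark: your appeal to bounded first derivatives of $u$ is not actually needed, since $u\in C^1$ is locally Lipschitz and $\Div_x u$ is continuous along the compact characteristic curve, which already makes the exponent finite.
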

System \eqref{Euler-smooth} can be written in the form
$$
\partial_t 
\begin{pmatrix}
\vr\\
u  
 \end{pmatrix}
+ u \nabla_x \begin{pmatrix}
\vr\\
u  
 \end{pmatrix}
+ 
\begin{pmatrix}
\vr \Div_x u \\
\frac{\nabla_x \vr}{\vr}  
 \end{pmatrix}
=
\begin{pmatrix}
0\\
\frac{1}{\vr}  F(\vr, u, \nabla_x \Phi(x)
 \end{pmatrix}
$$
or equivalently
\begin{eqnarray}
\partial_t 
\begin{pmatrix}\label{h-s}
\vr\\
u  
 \end{pmatrix}
+ 
\nabla_x \cdot
\begin{pmatrix}
P \\
\frac{P\otimes P}{\vr^2} + \log{\vr} \CI_3 
 \end{pmatrix}
=
\begin{pmatrix}
0\\
\frac{1}{\vr}  F(\vr, u, \nabla_x \Phi(x)
 \end{pmatrix}
\end{eqnarray}
with $P= \vr u$ and $F$ given in \eqref{potential}.
 
It turns out that \eqref{h-s} has the following structure of symmetric hyperbolic systems:
For all $w \in {\mathcal G},$ there is a positive definite matrix $A_0(w)$ that is smooth in $w$ and satisfies: 
$$c_0^{-1} {\CI}_4 \le A_0(w) \le c_0  {\CI}_4,$$     	
with a constant $c_0$  uniform for $w \in {\mathcal G}_1 \subset \bar{\mathcal G}_1 \subset {\mathcal G}$ 
such that 
$$A_i(w) = A_0(w)  \nabla f_i(w)$$ 
is symmetric.

Here, $\nabla f_i(w), i = 1,2,3$ are the $4\times 4$ Jacobian matrices and ${\CI}_4$ is the $4\times 4$ identity matrix. 

 The matrix
 $$A_0(w) = 
 \begin{pmatrix}
 {\vr}^{-1}      & 0\\
 0                    &\vr \mathbb{I}_3
\end{pmatrix}
$$
is called the symmetrizing matrix of system \eqref{h-s}. 
Multiplying \eqref{h-s} by $A_0$ we obtain 
\begin{equation}\label{in-h-s}
A_0(w) \partial_t w + A(w) \Grad_x w = G(w,\nabla_x\Phi)
\end{equation}
with smooth initial data
\begin{equation}
w_0= w(x,0). \label{ID-in-h-s}
\end{equation}
Here, $A(w)= (A_1(w), \dots, A_3(w)) $ denotes a matrix whose columns \\ 
$A_i(w) = A_0(w) \Grad_x f_i(w) $ are symmetric, and 
$$f(w) = 
\begin{pmatrix}
P\\
\frac{P \otimes P}{ \vr^2} + \log{\vr} \mathbb{I}_3
\end{pmatrix}
$$
with $P=\vr u$ and $\Grad_x f_i(w),$ are the $4 \times 4$ Jacobian matrices  and $\mathbb{I}_3$ denotes the $3\times 3$ identity matrix.
The $4\times 1$ vector $G$ in \eqref{in-h-s} is given 
\begin{eqnarray}\label{source}
G(w, \nabla_x \Phi)= A_0(w) 
\begin{pmatrix}
0\\
\frac{1}{\vr} F(\vr, u, \nabla_x \Phi)
\end{pmatrix}
=
\begin{pmatrix}
0\\
F(\vr, u, \nabla_x \Phi)
\end{pmatrix}
\end{eqnarray}

which implies that 
$$
G(w, \nabla_x \Phi))
= 
\begin{pmatrix}
0\\
\int_{\R^d} K(x,y)\vr(x)\vr(y)[u(y)-u(x)]~dy - \vr \Grad_x \Phi
\end{pmatrix}.
$$
We are now ready to prove the local existence of smooth solutions.

\begin{theorem}\label{local-existence}
Assume $w_0= (\vr_0, u_0) \in H^s \cap L^{\infty}(\R^3)$ with $s>5/2$ and $\vr_0(x)>0$ and that $\na_x \Phi \in H^s$.
Then there is a finite time $T \in (0,\infty),$ depending on the $H^s$ and $L^{\infty}$ norms of the initial data, such that the Cauchy problem  \eqref{eq:cont}-\eqref{eq:moment} and \eqref{ID} has a unique bounded smooth solution $w=(\vr, u) \in C^1(\R^3 \times [0, T]),$ with $\vr >0$ for all $(x,t) \in \R^3 \times [0,T],$ and $(\vr, u) \in C([0,t]; H^s) \cap C^1([0,T]; H^{s-1})$.  
\end{theorem}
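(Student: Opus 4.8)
The plan is to regard \eqref{in-h-s} as a quasilinear symmetric hyperbolic system and to run the classical local existence machinery of Kato and Majda, the only genuinely nonstandard ingredient being the nonlocal source $F$. Since $A_0(w)$ is positive definite exactly where $\vr$ is bounded away from $0$, I would first fix $R>0$ and $\delta>0$ so that $\|w_0\|_s\le R/2$ and $\vr_0\ge 2\delta$, and then seek the solution inside the ``tube'' of states $\mathcal G_1=\{\,\vr\ge\delta,\ \|w\|_s\le R\,\}$; on $\mathcal G_1$ the matrices $A_0(w)$ and $A_i(w)=A_0(w)\Grad_x f_i(w)$ are smooth, symmetric, and obey the uniform bounds $c_0^{-1}\CI_4\le A_0(w)\le c_0\CI_4$. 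I would then construct iterates by setting $w^0\equiv w_0$ and letting $w^{k+1}$ solve the \emph{linear} symmetric hyperbolic problem
\[
A_0(w^k)\pa_t w^{k+1}+A(w^k)\Grad_x w^{k+1}=G(w^k,\na_x\Phi),\qquad w^{k+1}(\cdot,0)=w_0,
\]
each of which is solvable in $C([0,T];H^s)$ by Friedrichs mollification and standard linear energy theory.

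For the uniform estimate I would apply $D^\alpha$ with $|\alpha|\le s$, pair with $A_0(w^k)D^\alpha w^{k+1}$ in $L^2$, and integrate by parts. Symmetry of the $A_i$ reduces the principal contribution to harmless lower-order terms, while the commutators $[D^\alpha,A_i(w^k)]\Grad_x w^{k+1}$ are handled by Moser-type product and commutator estimates; the term $\pa_t A_0(w^k)$ is absorbed using the equation for $w^k$. This yields a differential inequality of the form
\[
\frac{d}{dt}\n w^{k+1}\n_{s,t}^2 \le C(R)\bigl(1+\n w^{k+1}\n_{s,t}^2\bigr),
\]
provided $\|G(w^k,\na_x\Phi)\|_s$ is controlled on $\mathcal G_1$. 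Choosing $T=T(R,\delta)$ small then keeps each iterate in the tube, with $\|w^{k+1}\|_s\le R$ and, by the transport structure of the continuity equation (the method-of-characteristics lemma stated above), $\vr^{k+1}\ge\delta$ on $[0,T]$.

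Convergence is obtained by an $L^2$ difference estimate: $w^{k+1}-w^k$ solves a linear system whose source is controlled by $w^k-w^{k-1}$, so $\{w^k\}$ is Cauchy in $C([0,T];L^2)$. Interpolating with the uniform $H^s$ bound gives strong convergence in $C([0,T];H^{s'})$ for every $s'<s$ and weak-$*$ convergence in $L^\infty(0,T;H^s)$, and passing to the limit produces a solution with $w\in L^\infty(0,T;H^s)\cap \mathrm{Lip}(0,T;H^{s-1})$. The upgrade to $w\in C([0,T];H^s)\cap C^1([0,T];H^{s-1})$ follows from the Bona--Smith argument (weak continuity together with continuity of the norm $t\mapsto\|w(t)\|_s$); since $s>5/2$, Sobolev embedding gives $w\in C^1(\R^3\times[0,T])$, and positivity of $\vr$ persists on $[0,T]$. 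Uniqueness follows by applying the same $L^2$ energy estimate to the difference of two solutions with identical data and invoking Gronwall's lemma. Finally, the entropy equality \eqref{eq:entropyEuler} is recovered, once the regularity above is available, by multiplying the momentum equation by $u$ and the continuity equation by $\log\vr+\Phi$, adding, and symmetrizing the interaction integral in $(x,y)$.

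The main obstacle is the nonlocal source, since everything above is textbook once the requisite bounds on $G$ are in hand. Writing $\tilde g(x)=\int_{\R^d}K(x,y)g(y)\,dy$, one has $F=\vr\,\widetilde{\vr u}-\vr u\,\tilde\vr-\vr\Grad_x\Phi$, so I must show that $w\mapsto G(w,\na_x\Phi)$ is bounded and Lipschitz from $\mathcal G_1$ into $H^s$. Because $K$ is smooth and bounded, differentiating under the integral sign gives $D_x^\alpha\tilde g(x)=\int_{\R^d}D_x^\alpha K(x,y)g(y)\,dy$ and hence $\|\tilde g\|_s\le C(K)\|g\|_{L^2}\le C(K)\|g\|_s$ for $|\alpha|\le s$; and since $s>3/2$, the space $H^s(\R^3)$ is a Banach algebra, so the products defining $F$ satisfy $\|F\|_s\le C(R)$ together with the Lipschitz bound $\|F(w_1)-F(w_2)\|_{s-1}\le C(R)\|w_1-w_2\|_{s-1}$ needed for the contraction. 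Verifying these estimates for the specific flocking interaction, rather than the quasilinear structure itself, is the one step that genuinely requires the hypotheses on $K$.
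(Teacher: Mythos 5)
Your proposal follows essentially the same route as the paper's: Majda's iteration scheme on the linearized symmetric hyperbolic problem \eqref{it-sc}, uniform $H^s$ energy bounds via the symmetrizer $A_0$, contraction of the differences in $C([0,T];L^2)$, interpolation up to $H^r$ for $r<s$, and Sobolev embedding to obtain a classical $C^1$ solution, with uniqueness from the same $L^2$ estimate. Your Bona--Smith argument for continuity of $t\mapsto\|w(t)\|_s$ is a standard substitute for the paper's device (strong right-continuity at $t=0$ combined with time-reversibility of \eqref{in-h-s}), and you correctly identify that the only nonstandard point is the nonlocal source $G$.

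Precisely at that point, however, your key estimate is false under the paper's hypotheses. You claim $\|\tilde g\|_s\le C(K)\|g\|_{L^2}$ by differentiating under the integral, but $K$ is only assumed smooth, symmetric and \emph{bounded}: take $K\equiv 1$ and $g\ge 0$ with $\int_{\R^3} g\,dy=1$; then $\tilde g\equiv 1\notin L^2(\R^3)$, and no Schur-type condition such as $\sup_x\int|K(x,y)|\,dy<\infty$ is available to make $g\mapsto\tilde g$ bounded on $L^2$. Differentiation under the integral only yields $D^\alpha_x\tilde g\in L^\infty$ (from $\|D^\alpha_xK\|_{L^\infty}$ and $g\in L^1$), never square-integrability of $\tilde g$ itself. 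The repair is to avoid estimating $\tilde g$ alone and bound only the products that actually occur in $F=\vr\,\widetilde{\vr u}-\vr u\,\tilde\vr-\vr\Grad_x\Phi$, using the mixed product estimate $\|fg\|_s\le C\|f\|_s\|g\|_{W^{s,\infty}}$ together with $\|\widetilde{\vr u}\|_{W^{s,\infty}}\le \max_{|\alpha|\le s}\|D^\alpha_xK\|_{L^\infty}\,\|\vr u\|_{L^1}$ and $\|\vr u\|_{L^1}\le\|\vr\|_{L^2}\|u\|_{L^2}$ by Cauchy--Schwarz; the difference bounds needed for the contraction close in the same way, since $\|P_1-P_2\|_{L^1}\le\|\vr_1-\vr_2\|_{L^2}\|u_1\|_{L^2}+\|\vr_2\|_{L^2}\|u_1-u_2\|_{L^2}$. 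Note also that the term $\vr u\,\tilde\vr$ is more delicate than $\vr\,\widetilde{\vr u}$: since $\vr\in H^s\cap L^\infty$ gives only $\vr\in L^p$ for $p\ge2$, the very definition of $\tilde\vr$ requires either an $L^1$ bound on $\vr$ (finite mass, which must then be propagated along the iteration) or a decay assumption such as $\sup_x\|D^\alpha_xK(x,\cdot)\|_{L^2_y}<\infty$; your proposal, like the paper's own rather sketchy treatment of $\n G_k\n_{0,T}$, passes over this integrability issue, and it is the one point where the hypotheses on $K$ genuinely enter.
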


Theorem \ref{local-existence} is a consequence of the following theorem on the local existence of smooth solutions,  with the specific state space ${\mathcal G}= \{(\vr,u)^\top: \vr  >0\} \subset \R^4$
for the inhomogeneous system \eqref{in-h-s}.

\begin{theorem}\label{local-smooth-in-h-s} 
Assume that $w_0: \R^d \to {\mathcal G}$ is in $H^s \cap L^{\infty}$ with $s > \frac{d}{2} + 1.$ Then, for the Cauchy problem \eqref{in-h-s}-\eqref{ID-in-h-s}, there exists a finite time $T=T(\|w_0\|_s, \|w_0\|_{L^{\infty}})\in (0,\infty)$ such that there is a classical solution $w \in C^1(\R^3 \times [0,T])$ with $w(x,t) \in {\mathcal G}$ for $(x,t) \in {\R}^d  \times [0,T]$ and $w \in C([0,T]; H^s) \cap C^1([0,T]; H^{s-1}).$
\end{theorem}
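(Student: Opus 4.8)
The plan is to prove Theorem \ref{local-smooth-in-h-s} by the classical energy method for symmetric hyperbolic systems, following the standard Kato--Majda framework. The system \eqref{in-h-s} is already in symmetric hyperbolic form: $A_0(w)$ is symmetric positive definite (uniformly so on compact subsets of $\mathcal G$), and each $A_i(w)=A_0(w)\Grad_x f_i(w)$ is symmetric. The strategy is to construct the solution as the limit of an iteration scheme in which the quasilinear system is replaced by a sequence of linear symmetric hyperbolic problems, derive uniform $H^s$ bounds on a time interval $[0,T]$ whose length depends only on $\|w_0\|_s$ and $\|w_0\|_{L^\infty}$, and then pass to the limit. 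The only genuinely new feature, relative to the textbook case, is the nonlocal integral source term $\int K(x,y)\vr(x)\vr(y)[u(y)-u(x)]\,dy$ inside $G(w,\na_x\Phi)$, so the main work is checking that this term is compatible with the energy estimates.

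\emph{First} I would set up the iteration: given $w^k$ with values in a fixed compact subset $\mathcal G_1\subset\subset\mathcal G$ (in particular with $\vr^k$ bounded below away from zero), define $w^{k+1}$ as the solution of the \emph{linear} problem $A_0(w^k)\partial_t w^{k+1}+A(w^k)\Grad_x w^{k+1}=G(w^k,\na_x\Phi)$ with $w^{k+1}(\cdot,0)=w_0$. Linear existence in $C([0,T];H^s)$ follows from standard theory since the coefficients $A_0(w^k),A(w^k)$ are as regular as $w^k$. \emph{Next} I would derive the basic energy estimate: apply $D^\alpha$ for $|\alpha|\le s$, pair with $A_0(w^k)D^\alpha w^{k+1}$, and integrate. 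The symmetry of the $A_i$ lets the principal part be integrated by parts so that only commutator terms $[D^\alpha,A_i(w^k)]\Grad_x w^{k+1}$ survive; these are controlled by Moser-type calculus inequalities in terms of $\|w^k\|_s$ and $\|w^{k+1}\|_s$. The term $\partial_t A_0(w^k)$ arising from differentiating the weighted norm is likewise bounded using the equation for $w^k$. This yields a differential inequality of the form
\begin{equation*}
\frac{d}{dt}\|w^{k+1}\|_s^2\le C\bigl(\|w^k\|_s,\|w^k\|_{L^\infty}\bigr)\bigl(1+\|w^{k+1}\|_s^2\bigr),
\end{equation*}
from which a uniform bound $\n w^{k}\n_{s,T}\le R$ on a common interval $[0,T]$ follows by a standard continuation/boot-strap argument, for $T$ small depending only on $\|w_0\|_s$ and $\|w_0\|_{L^\infty}$.

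For the source term I would treat $\na_x\Phi\in H^s$ as a fixed forcing, whose contribution is harmless, and handle the nonlocal piece by Young's convolution inequality: since $K$ is smooth, symmetric and bounded, the map $w\mapsto \int K(x,y)\vr(x)\vr(y)[u(y)-u(x)]\,dy$ is a smooth quadratic (hence locally Lipschitz) nonlinearity from $H^s\cap L^\infty$ into $H^s$, with $D^\alpha$ falling on the smooth kernel or on the factors $\vr,u$ in a way controlled by $\|K\|_{W^{s,\infty}}$ and the $H^s\cap L^\infty$ norms of $(\vr,u)$. \emph{Then} I would prove contraction: estimating the differences $w^{k+1}-w^k$ in the \emph{low} norm $L^2$ (not $H^s$), one obtains $\n w^{k+1}-w^k\n_{0,T}\le \tfrac12\n w^{k}-w^{k-1}\n_{0,T}$ after possibly shrinking $T$, so $w^k$ is Cauchy in $C([0,T];L^2)$; interpolating with the uniform $H^s$ bound gives convergence in $C([0,T];H^{s'})$ for every $s'<s$, which suffices (as $s'>d/2+1$) to pass to the limit in all terms, including the quadratic nonlocal one, and to recover $w\in C([0,T];H^s_{\mathrm{weak}})\cap C^1([0,T];H^{s-1})$; the strong continuity $w\in C([0,T];H^s)$ is then obtained by the usual Bona--Smith regularization or by proving both weak continuity and continuity of the norm. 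Uniqueness follows from the same $L^2$ difference estimate applied to two putative solutions, and $\vr>0$ is preserved by the characteristic-transport lemma already stated in the appendix.

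\emph{The hard part} will be the commutator and Moser estimates combined with the requirement that $T$ depend only on the initial norms: one must ensure that the constant $C$ in the energy inequality depends on $w^k$ only through quantities that remain uniformly bounded by $R$ along the iteration, and in particular that the lower bound $\vr^k\ge c>0$ (needed to control $A_0^{-1}$ and the factor $1/\vr$ in $G$) is propagated uniformly. This is where the nonlocal source demands care: unlike a pointwise source, its $H^s$ bound uses the \emph{global} $L^1$ or $L^2$ mass of $\vr$, so I would track $\|\vr\|_{L^1}$ (conserved by the continuity equation) alongside the $H^s$ norm to close the estimate and keep $T$ bounded below independently of $k$.
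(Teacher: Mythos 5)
Your proposal follows essentially the same route as the paper: a Majda-type iteration on linearized symmetric hyperbolic problems, uniform $H^s$ energy bounds (commutator/Moser estimates) on a time interval depending only on $\|w_0\|_s$ and $\|w_0\|_{L^\infty}$, contraction in the low $L^2$ norm, and interpolation plus Sobolev embedding to pass to the limit, with positivity of $\vr$ preserved along characteristics. The only divergences are technical variants within the same framework --- the paper mollifies the initial data $w_0^k=\eta_{\epsilon_k}\star w_0$ of each iterate and gets strong $H^s$-continuity via right-continuity at $t=0$ plus time-reversibility rather than Bona--Smith, and your bilinear $H^s$ estimate of the nonlocal source with tracking of the $L^1$ mass of $\vr$ is, if anything, cleaner than the paper's Hardy-type bound on $G_k$.
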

\begin{proof}
The proof of this theorem proceeds via a classical iteration scheme method. An outline of the proof of Theorem \ref{local-smooth-in-h-s} (and therefore Theorem \ref{local-existence})  is given below.
Consider the standard mollifier $$\eta(x) \in C^{\infty}_0({\R}^3),\quad \mbox{supp }\eta(x) \subseteq \{x; |x|\le1\},\quad \eta(x) \ge 0,\quad \int_{\R^3} \eta(x) dx =1,$$ and set 
$$\eta_{\epsilon} = \epsilon^{-d} \eta(x/\epsilon).$$ 
Define the initial data $w_0^k \in C^{\infty}(\R^3)$ by
$$
w_0^k(x) = \eta_{{\epsilon}_k}\star w_0(x) = \int_{\R^3} \eta_{{\epsilon}_k}(x-y) w_0(y) dy,
$$
where $\epsilon_k = 2^{-k} \epsilon_0$ with $\epsilon_0>0$ constant. 
We construct the solution of \eqref{in-h-s}-\eqref{ID-in-h-s} using the following iteration scheme.
Set  $w^0(x,t) =w_0^0(x)$ and define $w^{k+1}(x,t),$ for $k=0,2,\dots,$ inductively  as the solutions of linear equations:
\begin{equation}\label{it-sc}
\begin{cases}
A_0(w^k) \partial_t w^{k+1} + A(w^k) \Grad_x w^{k+1} = G(w^k, \nabla_x \Phi),\\
 w^{k+1}|_{t=0} = w_0^{k+1}(x).
\end{cases}
\end{equation} 
By well-known properties of mollifiers it is clear that:
$$\| w_0^k - w_0\|_s \to 0, \, \mbox{as} \, k \to \infty, \, \mbox{and}\, 
\| w_0^k - w_0\|_0 \le C_0 \epsilon_k \|w_0\|_1,$$
 for some constant $C_0.$
Moreover, $w^{k+1} \in C^{\infty}(\R^d \times [0, T_k])$ is well-defined on the time interval $[0,T_k]$,
where $T_k >0$ denotes the largest time for which the estimate $\n w^k - w_0^0\n_{s,T_k} \le C_1$ holds. 
We can then assert the existence of $T_* >0$ such that $T_k \ge T_*$ $(T_0=\infty)$ for $k=0,1,2,\dots, $ from the following estimates:
\begin{equation}\label{est}
\n w^{k+1} - w_0^0\n_{s, T_*} \le C_1, \,\, \n w_t^{k+1}\n_{s-1, T_*} \le C_2,
\end{equation}
for all $k=0,1,2,\dots,$ for some constant $C_2>0.$
From \eqref{it-sc}, we get
\begin{equation} \label{lin-bl}
A_0(w^k) \partial_t(w^{k+1} - w^k) + A(w^k) \nabla(w^{k+1} - w^k) = E_k + G_k,
 \end{equation}
where
\begin{equation}
\begin{cases}
&E_k = -(A_0(w^k) - A_0(w^{k-1})\partial_t w^k - (A(w^k) - A(w^{k-1})) \nabla w^k,\\
&G_k= G(w^k,\nabla_x \Phi).
\end{cases}
\end{equation}
From the standard energy estimate for the linearized problem \eqref{lin-bl} we get
$$\n w^{k+1} - w^k\n_{0,T} \le C e^{CT} (\|w_0^{k+1} - w_0^k\|_0 + T \n E_k \n_{0,T} +T \n G_k \n_{0,T}).$$ 
Taking into consideration the property of mollification, relation \eqref{est}, we have that
$$\|u_0^{k+1} - u_0^k\|_0 \le C 2^{-k}, \,\, \n E_k \n_{0,T} \le C \n w^k - w^{k-1}\n_{0,T}.$$ 
Note that,
\[
\begin{split}
	\n G_k \n_{0,T} &= \sup_{0 \le t \le T} \|G_k(\cdot,t)\|_s \\
	&= \sup_{0\le t\le T} \left\| \int_{\R^d} K(x,y)\vr^k(x)\vr^k(y)[u^k(y)-u^k(x)]~dy - \vr^k \Grad_x \Phi(x)\right\|_s\\
	&\le M \int_{\R^d} |\nabla \sqrt{u^k}|^2 dy + C =  M \int_{\R^d}  \frac{|\nabla u^k(y)|^2}{u^k(y)}dy + C.
\end{split}
\]
Here we use the well known result that states: For any $v \in H^1(\R^3),$
$$ \int_{\R^3} \frac{|v(x)|^2}{1+ |x|^2} dx \le M \int_{\R^3}|\nabla v|^2 dx.$$

For small $T$ such that $C^2 T \exp{CT} < 1$ one obtains
$$\sum_{k=1}^{\infty} \n w^{k+1} - w^k \n_{0,T} < \infty, $$
which implies that there exists $w \in C([0,T]; L^2(\R^3))$ such that 
\begin{equation}\label{conv}
\lim_{k \to \infty} \n w^k - w\n_{0,T} =0.
\end{equation}
From \eqref{est}, we have $\n w^k\n_{s,t} + \n w^k_t\n_{s-1, T} \le C,$ and $w^k(x,t)$ belongs to a bounded set of $G$ for $(x,t) \in \R^d \times [0,T].$ Then by interpolation we have that for any $r$ with $0 \le r < s,$
\begin{equation}\label{last}
\n w^k - w^l \n_{r,T} \le C_s \n w^k - w^l \n^{1-r/s}_{0,T} \n w^k - w^l \n_{s,T}^{r/s} \le \n w^k - w^l\n_{0,T}^{1-r/s}.
\end{equation}
From \eqref{conv} and \eqref{last},
$$
\lim_{k \to \infty} \n w^k - w\n_{r.,T} =0
$$
 for any $0<r<s.$ Therefore, choosing $r > \frac{3}{2} +1$, 
Sobolev's lemma implies
\begin{equation}\label{cnv}
w^k \to w \,\, \mbox{in} \,\, C([0,t]; C^1(\R^3)). 
\end{equation} 
From \eqref{lin-bl} and \eqref{cnv} one can conclude that $w^k \to w$ in $C([0,T]; C(\R^3),$ $w \in C^1(\R^3\times [0,T]),$ and $w(x,t)$ is a smooth solution of   \eqref{in-h-s}-\eqref{ID-in-h-s}.
To prove $u \in C([0,T];H^s) \cap C^1([0,T];H^{s-1})$, it is sufficient to prove $u \in C([0,T]; H^s)$ since it follows from the equations in \eqref{in-h-s} that $u \in C^1([0,T]; H^{s-1}).$
\end{proof}
\begin{remark}
The proof of Theorem \ref{local-existence} follows the line of argument presented by Majda \cite{Majda-1984}, which relies solely on the elementary linear existence theory for symmetric hyperbolic systems with smooth coefficients (see also Courant-Hilbert \cite{C-H-1953}).  
\end{remark}

\begin{lemma}\label{lem:}
	Let  $(\vr, u)$ be a sufficiently smooth solutions  to 
	\eqref{eq:cont}-\eqref{eq:moment}. The energy $E(u)$ satisfy
	the following entropy equality
	\begin{equation} \label{en-e}
		\partial_t \int_{\R^d} E(u)~dx  + \frac{1}{2}\int_{\R^d} K(x,y)\vr(x)\vr(y)\left|u(x)-u(y)\right|^2~dydx = 0.
	\end{equation}
\end{lemma}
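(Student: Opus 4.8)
The plan is to establish a pointwise (in $x$) energy balance for \eqref{eq:cont}-\eqref{eq:moment} and then integrate it over $\R^d$, the dissipation arising purely from the symmetry of the kernel $K$. Observe that the pressure here is $p(\vr)=\vr$, so the system is the isothermal Euler system with the alignment forcing and the confinement force $-\vr\Grad_x\Phi$, and $E(u)=\tfrac{\vr|u|^2}{2}+\vr\log\vr+\vr\Phi$ is exactly the associated mechanical energy. The quickest route is to invoke the entropy identity already recorded in Section \ref{sec:proof}, namely $E(U)_t+\Div_x Q(U)=(-\tilde\vr P+\vr\tilde P)\tfrac{P}{\vr}$, since the entropy $E(U)$ there coincides with $E(u)$ here and its right-hand side is precisely the alignment term with the $\Phi$-contributions already absorbed into the flux $Q=\widehat Q+P\Phi$.

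For completeness I would also reconstruct that balance directly: dotting the momentum equation \eqref{eq:moment} with $u$ and using the continuity equation \eqref{eq:cont} to rewrite the convective and pressure contributions, one obtains the local identity
\begin{equation*}
\begin{split}
&\partial_t\Big(\tfrac{\vr|u|^2}{2}+\vr\log\vr\Big)+\Div_x\Big[\big(\tfrac{\vr|u|^2}{2}+\vr\log\vr+\vr\big)u\Big]\\
&\qquad = u\cdot\Big(\int_{\R^d}K(x,y)\vr(x)\vr(y)[u(y)-u(x)]~dy-\vr\Grad_x\Phi\Big).
\end{split}
\end{equation*}
Separately, multiplying \eqref{eq:cont} by $\Phi$ gives $\partial_t(\vr\Phi)=-\Phi\Div_x(\vr u)$, whose integral over $\R^d$ equals $\int_{\R^d}\vr u\cdot\Grad_x\Phi~dx$ and thus cancels exactly the confinement contribution $-\int_{\R^d}\vr u\cdot\Grad_x\Phi~dx$ coming from the right-hand side above. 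Integrating the local balance over $\R^d$, the flux divergence integrates to zero by the decay of the smooth $H^s$ solution, so $\partial_t\int_{\R^d}E(u)~dx$ reduces to the alignment contribution $\int_{\R^d}\int_{\R^d}K(x,y)\vr(x)\vr(y)[u(y)-u(x)]\cdot u(x)~dy~dx$.

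It then remains to symmetrize this double integral. Relabelling $x\leftrightarrow y$ and using $K(x,y)=K(y,x)$, the same quantity equals $\int_{\R^d}\int_{\R^d}K(x,y)\vr(x)\vr(y)[u(x)-u(y)]\cdot u(y)~dx\,dy$; averaging the two forms and using the elementary identity $[u(y)-u(x)]\cdot u(x)+[u(x)-u(y)]\cdot u(y)=-|u(x)-u(y)|^2$ yields $-\tfrac12\int_{\R^d}\int_{\R^d}K(x,y)\vr(x)\vr(y)|u(x)-u(y)|^2~dx\,dy$, which is precisely \eqref{en-e} after transposition. The only point genuinely requiring care is the justification of the integration by parts, i.e. the vanishing of the flux at infinity and the integrability of the alignment term; both follow from the regularity $(\vr,u)\in H^s$ with $s>5/2$ together with the boundedness of $K$, while the cancellation of the $\Phi$-terms and the symmetrization are routine algebra.
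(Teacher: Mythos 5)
Your proposal is correct and follows essentially the same route as the paper: dotting the momentum equation with $u$ and using the continuity equation (equivalent to the paper's multiplication by $(\vr\log\vr)'$) to form the local energy balance, cancelling the confinement term via $\partial_t\int\vr\Phi\,dx=\int\vr u\cdot\Grad_x\Phi\,dx$, and symmetrizing the alignment double integral with $K(x,y)=K(y,x)$ to produce the dissipation $-\tfrac12\iint K\vr(x)\vr(y)|u(x)-u(y)|^2$. Your symmetrization even fixes a sign typo in the paper's displayed identity, and your attention to the vanishing of the flux at infinity matches the paper's appeal to a density argument for $H^s$ solutions.
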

\begin{proof}
The result is obtained by the following standard process: first we multiply the continuity equation \eqref{eq:cont0}  by $(\vr \log{\vr})'$
and the momentum equation \eqref{eq:mom} by the velocity field $u$, and we add  the resulting relations. Next we integrate over the domain taking into account that $(\vr,u)$ is sufficiently smooth  and using that 
$$\partial_t \int_{\R^d} \vr \, \Phi dx = \int_{\R^d} \partial_t \vr \, \Phi dx = -\int_{\R^d} \Div(\vr u)\Phi \, dx = 
\int_{\R^d} \int_{\R^d} \vr u \nabla_x \Phi dx.$$
and 
$$
\int_{\R^d} u(x) \int_{\R^d}  K(x,y) \vr(x) \vr(y) [u(y) - u(x)] dy dx  
$$
$$
\quad \quad\quad \quad \quad \quad \quad =  \frac{1}{2} \int_{\R^d}\int_{\R^d}K(x,y)\vr(x)\vr(y)\left|u(x) - u(y)\right|^2~dy\, dx
$$
we arrive at \eqref{en-e}.
The result for  a solution $(\vr, u)$ with the regularity established in Theorem \ref{local-existence} is established using a density argument.
\end{proof}

\subsection{Proof of Theorem \ref{thm:Euler}}
The proof of Theorem \ref{local-existence} can be further reduced to verifying that $u(x,t)$ is strongly right continuous at $t=0,$ since the same argument works for the strong right-continuity at any other  $t \in (0,T]$  and the strong right-continuity on $[0,T)$  implies the strong left-continuity on $(0,T]$ because the equations in \ref{in-h-s} are reversible in time. Taking this into consideration  estimate  \eqref{en-e} implies the result. $\square$


\end{document}